 \def\newblock{\ }%
\newcommand{\N}{\mathbb{N}}
\newcommand{\Z}{\mathbb{Z}}
\newcommand{\E}{\mathbb{E}}
\newcommand{\procond}[2]{\mathbb{P}\mathopen{}\left[#1\middle|#2\right]}
\newtheorem{theorem}{Theorem}
\newtheorem{lemma}[theorem]{Lemma}
\newtheorem{corollary}[theorem]{Corollary}
\newtheorem{proposition}[theorem]{Proposition}
\newtheorem{definition}[theorem]{Definition}
\newtheorem{remark}{Remark}
\numberwithin{equation}{section}
\def\IND{\mathbbm{1}}
\newcommand{\argmin}{\mathop{\mathrm{argmin}}}
\newcommand{\V}{\mathbb{V}}
\newcommand{\PP}{\mathbb{P}}
\newcommand{\prob}[1]{\mathbb{P}\left( #1 \right)}
\newcommand{\Lset}{\mathcal{L}} %leaf set
\newcommand{\leftmost}{L}
\newcommand{\rightmost}{R}
\newcommand{\leaves}{\Lambda}
\begin{document}

\title{Subtractive random forests with two choices
 \author{
  Francisco Calvillo  \\
LPSM, Sorbonne Université, 4 Place Jussieu, 75005 Paris, France
  \and
  Luc Devroye \\
  School of Computer Science, McGill University, \\
  Montreal, Canada
\and
G\'abor Lugosi \thanks{Corresponding author; email: {\tt gabor.lugosi@gmail.com}}\\
Department of Economics and Business, \\
Pompeu  Fabra University, Barcelona, Spain \\
ICREA, Pg. Lluís Companys 23, 08010 Barcelona, Spain \\
Barcelona Graduate School of Economics
}
}

\maketitle

\begin{abstract}
  Recommendation systems are pivotal in aiding users amid vast online
  content. Broutin, Devroye, Lugosi, and Oliveira proposed
  Subtractive Random Forests (\textsc{surf}), a model that emphasizes
  temporal user preferences. Expanding on \textsc{surf}, we introduce
  a model for a multi-choice recommendation system, enabling users to select from
  two independent suggestions based on past interactions. We evaluate
  its effectiveness and robustness across diverse scenarios,
  incorporating heavy-tailed distributions for time delays. By
  analyzing user topic evolution, we assess the system's
  consistency. Our study offers insights into the performance and
  potential enhancements of multi-choice recommendation systems in
  practical settings.
\end{abstract}

\noindent
{\bf Keywords:} subtractive random forests, multi-choice recommendation

\section{Introduction}

Today's online platforms have a record number of items available for their users. In many cases, presenting them as a catalogue is no longer a good option and can make it difficult for the user to find what they are interested in.
Recommendation systems have become an essential tool to cope with the overload of information available on the web.

A well-known way to approach recommendation systems today is through
deep learning, and many of the most effective recommendation systems
use this principle, see \cite{deeplearning}. However, the mechanism of
these systems is notable for its opacity. In the realm of online
sequential recommendation systems, Broutin, Devroye, Lugosi, and
Oliveira \cite{surf} present an approach, offering a
framework that considers the temporal aspect of user preferences by a
simple mechanism.  They define a model recommending topics based on a
random time delay. The topic that is recommended at time $n$ is the
same that was recommended at time $n-Z_n$, where $(Z_k)_{k\ge 1}$ is a
sequence of i.i.d.\ random variables identically distributed on
$\{1,2,\ldots\}$. This approach led them to study a family of random
forests called subtractive random forests (\textsc{surf}), allowing a
detailed structural study. However, one expects a recommendation
system to make several recommendations at a time, not just one.

To model this more realistic scenario, our paper proposes a two-choice
recommendation system inspired by the same idea. We envision a
scenario where users are presented with two independent
recommendations, drawn from the same temporal recommendation mechanism
and allowing users to select the most appealing option.
The resulting model has some intriguing mathematical properties, and the main goal of this paper
is to analyze the model in order to understand the long-term behavior of such recommendation systems.

\textcolor{black}{While our model provides a mathematically tractable framework to study sequential recommendation dynamics, it is intentionally abstract and does not incorporate key aspects of modern recommender systems such as content information, user-specific features, or contextual data. This abstraction limits its direct applicability to real-world platforms, where personalization and diversity are central goals. However, we view this simplification as a necessary step: by first understanding the behavior of a minimal, analytically transparent model, we lay the groundwork for tackling more complex and realistic settings in which such additional factors can be systematically incorporated.}

\textcolor{black}{Beyond serving as a recommender system in its own right, our model can also be viewed as a theoretical tool to gain insight into more complex algorithms. In particular, it offers a simple framework to reason about phenomena such as the repeated occurrence of topics in classical recommender systems, where machine learning mechanisms are often treated as black boxes.
By studying this simplified yet transparent model, we can uncover structural patterns that would otherwise remain hidden, offering insight into the dynamics of more sophisticated recommendation methods.}

\subsection{A two-choice recommendation model}

We assume that the initial pool of topics is infinite and represented by the set of non-positive integers $\{0, -1, -2,\ldots\}$.
% so that each nonpositive integer is identified as a topic.
We now consider independent and identically distributed random variables $Z$, $W$, $(Z_n)_{n\ge 1}$ and $(W_n)_{n\ge 1}$
on the set of positive integers $\N$.
Define 
$$
q_i=\PP(Z=i) \quad \text{and} \quad p_i=\PP(Z\ge i), \quad i \ge 1~.
$$

Each topic $i \leq 0$ is assigned a preference value $U_i$ within the range $[0,1]$, where we assume that $(U_n)_{n\leq0}$ is a sequence of random variables independent of the sequences $(Z_n)_{n\geq1}$ and $(W_n)_{n\geq1}$. \textcolor{black}{In the context of our model, the term user refers to a representative individual interacting with the system, and the values $U_i$ represent how appealing each topic is to this user.}

Following \cite{surf}, we can define a sequential random colouring of the positive integers as follows. For each non-positive integer $i\le 0$, define its colour $C_i=i$ and its preference value $V_i=U_i$. We assign colours and preference values to the positive integers $n\in \N$ by the recursion 
$$
\left\{
\begin{array}{ll}

    C_n=C_{n-Z_n} \text{ and } V_n=V_{n-Z_n} & \text{ if } V_{n-Z_n}< V_{n-W_n} \\
    C_n=C_{n-W_n} \text{ and } V_n=V_{n-W_n} & \text{ otherwise }~.
\end{array}
\right.
$$
Thus, by identifying the colour $C_n$ as the topic recommended at time $n\ge 1$, this definition means that at the time instant $n\ge 1$, the user receives two recommendations (the topic $C_{n-Z_n}$, and the topic $C_{n-W_n}$), and chooses the one with the lowest (best) preference value.

This process naturally defines a random directed graph whose vertex set is $\Z$ by drawing an edge from any positive integer $n\ge 1$ to any integer $m<n$ if and only if $m=n-Z_n$ or $m=n-W_n$. Vertices with negative index are called \textit{leaves}, as is customary for final nodes in dags (i.e., directed acyclic graphs). 
Let $T_n$ denote the set of vertices that are reached from the vertex $n\ge 1$. This set can be defined recursively by saying that $T_n=\{ n \}$ for any $n\le 0$, and $$T_n=\{n\}\cup T_{n-Z_n} \cup T_{n-W_n}$$ for any $n\ge 1$. Define $$\Lset_n=T_n\cap \{0, -1, -2, \dots\}$$ as the set of vertices with a non-positive index that is reached from $n$. In other words, $\Lset_n$ is the set of leaves in the subgraph of vertices reached from $n$. Note that 
\begin{equation*}
    C_n=\argmin_{i\in \Lset_n} U_i ~\text{ and }~ V_n=\min_{i\in \Lset_n} U_i~.
\end{equation*}

One can assess the system's long-term performance by analyzing the asymptotic behavior of the sequence $(V_n)_{n\ge 1}$. If $V_n$ approaches $\inf_{i\le 0} U_i$ as $n\to \infty$, then this means that after waiting a sufficient amount of time, the topics recommended to the user tend to align more closely with their preferences. This gives us a consistency criterion for our model. 
We consider the three following configurations of the preference values: 

\begin{enumerate}%[label=(\roman*)]
    \item[(i)]  $U_0<U_{-1}<U_{-2}< U_{-3}\dots$
    \item[(ii)] $U_0>U_{-1}>U_{-2}> U_{-3}\dots$ 
    \item[(iii)] $(U_n)_{n\le 0}$ i.i.d.\ and uniformly distributed in $[0,1].$
\end{enumerate}

These assumptions correspond to natural scenarios and allow us to study the long-term consistency of the recommendation system.
In the first case, topics have a monotone preference with the most recent (corresponding to $i=0$) being the most preferred one.
In the second case, older topics are preferred, while in the third case, topics have a random order of preference. 
We say that the system is consistent if, in the long run, it offers near-optimal recommendations to the user in terms of their preference.
In Section \ref{sec: results}, we rigorously define various notions of consistency, corresponding to the scenarios above. 
These definitions lead us to study the sequence $(V_n)_{n\ge 1}$ by studying the set of leaves $\Lset_n$.
We will pay particular attention to the three following random variables:

\medskip

\noindent $\bullet$ $\rightmost_n=\max \left\{ i : i \in  \Lset_n \right\}$ (the rightmost leaf in $\Lset_n$)

\noindent $\bullet$ $\leftmost_n = \min \left\{ i : i \in  \Lset_n \right\}$ (the leftmost leaf in $\Lset_n$)

\noindent $\bullet$ $\leaves_n = |\Lset_n|$ (the number of leaves reached from $n$).

\medskip

Each random variable helps us understand the long-term behavior of $V_n$ in the three initial configurations described above. 
Ideally, one would hope that $V_n\to \inf_{i\le 0} U_i$.
In configuration (i)  this is equivalent to $\rightmost_n \to 0$, 
in configuration (ii) it is equivalent to $\leftmost_n\to -\infty$, while in case (iii) to $\leaves_n\to \infty$.
However, one cannot expect to have $\rightmost_n\to 0$ in a general case. 
To introduce a more reasonable consistency criterion for case (i), we require boundedness of the sequence $(\rightmost_n)_{n\ge 1}~.$

Let us define the subsets $T_n^Z=T_n^W=S_n=\{n\}$ for any $n\le 0$, and let 

\noindent $$T_n^Z=\{n\}\cup T_{n-Z_n}^Z~,$$

\noindent $$T_n^W=\{n\}\cup T_{n-W_n}^W~,$$

\noindent $$\text{and } S_n=\{n\}\cup S_{n-\min(Z_n,W_n)}$$

\noindent for any $n\ge 1$, so that $T_n^Z\subset T_n$ (resp. $T_n^W\subset T_n$) is the set of vertices that can be reached from $n$ by following only the $Z$-edges (resp. $W$-edges), and $S_n\subset T_n$ is the set of vertices that can be reached from $n$ by following only the shortest edges.

\begin{figure}[H]
\begin{center}
\leavevmode
\includegraphics[width=\textwidth]{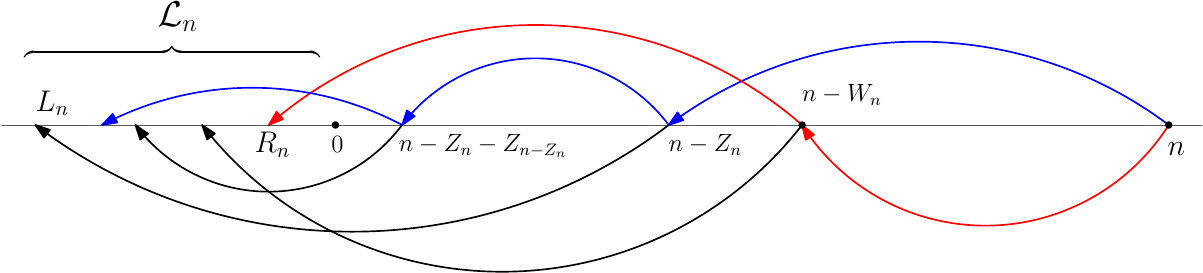}
\end{center}
\caption{Illustration of the subgraph induced by $T_n$. Shown are the definitions of $\leftmost_n$, $\rightmost_n$, and $\Lset_n$. Blue edges indicate the $Z$-chain of vertices $T_n^Z$, while red edges connect all vertices in the chain $S_n$.}
    \label{fig: sous arbres}
\end{figure}

Finally, for any $n\ge 1$, let $r_n=\prob{0\in T_n^Z}=\prob{0\in T_n^W}$ denote the probability that vertex $0$ belongs to the $Z$-chain starting at vertex $n$. We set $r_0=1$ so that $r_n$ satisfies the recursion
\begin{equation}
    r_n=\sum_{i=1}^n q_i r_{n-i}~.\label{r recursion}
\end{equation}
\color{black}
For convenience, a table summarizing the main notation used throughout the paper is provided in the Appendix.
\begin{remark}
Observe that the chains $T_n^Z$, $T_n^W$, and $S_n$ can be naturally interpreted as a renewal process, where each step along the chain corresponds to a renewal event, and the edge lengths play the role of renewal intervals. Within this framework, the quantity $r_n$ coincides with the renewal measure of the process corresponding to $T_n^Z$.
\end{remark}
\color{black}

% \noindent Similarly to $T_n$, we define $S_n=\emptyset$ for all nonpositive integers $n\in \{0,-1,-2,\dots\}$, and, for any $n\ge 1$, define the set $S_n$ by the recursion 
% $$S_n=\{n, n-\min(Z_n,W_n)\}\cup S_{n-\min(Z_n,W_n)}~,$$
% so that $S_n$ is the set of vertices that can be reached from $n$ by following only the shortest edges.

\subsection{Related work}
This paper builds on the work of Broutin, Devroye, Lugosi and Oliveira \cite{surf}, who studied a one-choice version of this model by examining the properties of a graph whose vertex set is $\Z$, and whose edges are $\{n,n-Z_n\}$ for every $n\ge 1$, given some sequence $(Z_n)_{n\ge 1}$ of i.i.d. random variables distributed in $\{1,2,3,\dots\}$. This defines a random forest where each non-positive integer is the root of a tree. They also define a random coloring by setting $C_i=i$ for all $i\le 0$, and $C_n=C_{n-Z_n}$ for all $n\ge 1$. Each tree in the forest corresponds to a colour (i.e., a topic). The graph that we consider in our paper is nothing else than the superposition of two independent copies of this forest, and the sets $T_n^Z$ and $T_n^W$ defined above are the chains that connect the vertex $n$ to the root of its tree in each of these two copies.

One of the main results in \cite{surf} shows that the process has two
distinct behaviors: if $\E Z=\infty$, then, almost surely, all trees
in the forest are finite, meaning that no topic will be recommended
infinitely often, while if $\E Z < \infty$, after some random amount
of time, all vertices connect to the same tree almost surely, meaning
that all topics recommended to the user become the same. 
In the two-choice
model however, there are three distinct regimes: $\E Z <\infty$; $\E Z= \infty$ yet
$\E \min(Z,W)<\infty$; and $\E \min(Z, W) = \infty$.
We call these the light-, moderate-, and heavy-tailed regimes.

The random forest in \cite{surf} appears as a subgraph of a random graph model previously studied by Hammond and Sheffield in \cite{hammondsheffield2013}. Given a sequence $(Z_n)_{n\in \Z}$ of independent random variables, identically distributed in $\{1,2,3,\dots\}$, they consider a graph with vertex set $\Z$ such that vertices $n,m\in \Z$, with $m<n$, are connected by an edge if and only if $m=n-Z_n$. One can obtain the random forest in \cite{surf} by removing all edges between any two vertices with a non-positive index. They show that the random graph has almost surely a unique component when the sum $\sum_{n=1}^\infty r_n^2$ converges, and the graph almost surely has infinitely many connected components when the sum diverges.

\color{black}
Other variants of the (single-choice) random subtractive process have been analyzed in related probabilistic models. Blath, González, Kurt, and Spano \cite{BlGoKuSp13} studied long-range seed bank models, where the genealogical structure of populations is governed by random time jumps analogous to the subtractive mechanism. Chierichetti, Kumar, and Tomkins \cite{ChKuTo20} examined sequence models whose asymptotic behavior shares similarities with renewal-type processes underlying our setting. Baccelli, Haji-Mirsadeghi, Khezeli, and Sodre \cite{BaSo19, BaHaKh18, BaHaKh22} explored subtractive dynamics in the context of unimodular random graphs and population processes. Finally, Igelbrink and Wakolbinger \cite{IgWa22} investigated reinforced urn schemes where coalescence probabilities play a role closely related to recurrence properties of subtractive chains.
\color{black}
%   Other variants of the (single-choice)
% random subtractive process were studied, in different contexts, by
% Blath, Jochen, Gonz{\'a}lez, Kurt, and Spano \cite{BlGoKuSp13},
% Chierichetti, Kumar, and Tomkins \cite{ChKuTo20},
% Baccelli and Sodre \cite{BaSo19},
% Baccelli, Haji-Mirsadeghi, and Khezeli \cite{BaHaKh18},
% Baccelli, Haji-Mirsadeghi, and Khaniha \cite{BaHaKh22},
% and Igelbrink and Wakolbinger \cite{IgWa22}.

\subsection{Results}\label{sec: results}

The goal of this paper is to understand the behavior of the two-choice
model by studying its consistency in each of the three configurations
(i),(ii), and (iii), for three types of tails, light, moderate, and
heavy:
\begin{enumerate}%[label=(\roman*)]
        \item $\E Z < \infty$ (light tails)
        \item $\E Z = \infty$ and $\E \min(Z,W) < \infty$ (moderate tails)
        \item $\E \min(Z,W) = \infty$ (heavy tails)
\end{enumerate}

We now introduce the main definitions of consistency.

\begin{definition}
We say that the system is consistent in configuration (ii) if $\leftmost_n\to - \infty$; 
while it is consistent in configuration (iii) if $\leaves_n\to \infty$. 
We differentiate between strong and weak consistency based on whether
convergence occurs almost surely or in probability.
In the case of configuration (i), we say that the system is weakly consistent if $(\rightmost_n)_{n\ge 1}$ 
is a tight sequence. It is strongly consistent if the sequence $(\rightmost_n)_{n\ge 1}$ 
is almost surely bounded.
\end{definition}

\begin{remark}\label{pareto}
\textsc{Pareto tails.}
Consider a distribution with Pareto tails, that is, $q_n = \Theta (1/n^{1+\alpha})$, with $\alpha > 0$. This implies that $p_n = \Theta(1/n^\alpha)$. When $\alpha > 1$, we see that $\E Z < \infty$.
The more interesting situation is when $\alpha \in (0,1]$. For $\alpha \in (1/2,1]$, we have $\E Z = \infty$, yet $\E \min (Z,W) < \infty$.
Finally, for $\alpha \in (0,1/2]$, we have $\E \min(Z,W) = \infty$.
For light-tailed $Z$, we recall from Broutin, Devroye, Lugosi, Oliveira \cite{surf} that $r_n \to 1/\E Z$. For moderate and heavy tails,
however, we have $r_n \to 0$. When $\alpha \in (0,1)$, then $Z$ is in the domain
of attraction of the extremal stable law with parameter $\alpha$, which itself
has a tail distribution function that decays at the rate $1/n^\alpha$ (see, e.g., Ibragimov and Linnik \cite{ibragimov}, Zolotarev \cite{zolotarev} or Malevich \cite{malevich2015}). One can then deduce that $r_n$ decays at the rate 
$1/n^{1-\alpha}$. In particular, $\sum_n r_n^2 < \infty$ when $\alpha \in (0,1/2)$, which roughly corresponds to the case of heavy tails.
\end{remark}

Next, we describe the main results of the paper. 
For a summary, see the table at the end of this section.  
  
\subsubsection*{Light tails}
When $\E Z < \infty$, time jumps are so short that $T_n$ can't reach distant leaves, thus forcing the set $\Lset_n$ to be bounded.
This case does not require much work, since one can quickly determine consistency or inconsistency in each of the three configurations just by looking at the sequence $(\leftmost_n)_{n\ge 1}$, which happens to be bounded almost surely.

\begin{theorem}\label{M bounded as}
Let $\leftmost_\infty=\inf_{n\in \N} \leftmost_n$. If $\E Z < \infty$, then $\leftmost_\infty$ is finite almost surely.
Thus, the sequences $(\rightmost_n)_{n\in\N},(\leftmost_n)_{n\in\N}$ and $(\leaves_n)_{n\in\N}$ are almost surely bounded.
\end{theorem}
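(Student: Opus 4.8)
The plan is to identify exactly which non-positive integers are ever reached by some vertex of the forest and to show that, almost surely, there are only finitely many of them. Call a non-positive integer $m$ \emph{used} if $m \in \Lset_n$ for some $n \ge 1$; since $\Lset_n = T_n \cap \{0,-1,-2,\dots\}$, this is the same as $m \in T_n$ for some $n \ge 1$. The first step is to prove the following characterization: for $m \le -1$, the integer $m$ is used if and only if $Z_v = v - m$ or $W_v = v - m$ for some $v \ge 1$. One implication is immediate from the recursion defining $T_v$: if $v \ge 1$ and $v - Z_v = m$ (or $v - W_v = m$), then $m \in T_v$. For the other, I would take a directed path $n = v_0 \to v_1 \to \cdots \to v_k = m$ witnessing $m \in T_n$ and note that, because non-positive vertices are leaves (sinks), every $v_j$ with $j < k$ is positive; in particular $v_{k-1} \ge 1$ and $v_{k-1} - Z_{v_{k-1}} = m$ or $v_{k-1} - W_{v_{k-1}} = m$.

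Second, I would bound the probability that a given $m \le -1$ is used by a union bound over $v$ and over the two jump sequences:
$$
\PP(m \text{ is used}) \;\le\; \sum_{v\ge 1}\PP(Z_v = v - m) + \sum_{v\ge 1}\PP(W_v = v - m) \;=\; 2\sum_{j\ge 1-m} q_j \;=\; 2\,p_{1-m}~.
$$
Summing over $m \le -1$ and writing $k = 1 - m$, this gives $\sum_{m\le -1}\PP(m \text{ is used}) \le 2\sum_{k\ge 2} p_k \le 2\,\E Z$, which is finite precisely because $\E Z = \sum_{k\ge 1} p_k < \infty$ by hypothesis. By the Borel--Cantelli lemma, almost surely only finitely many non-positive integers are used, so $M_\infty = \inf_{n} M_n$ is finite almost surely.

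Finally, the boundedness of the three sequences follows for free: on the almost sure event that $M_\infty$ is finite, one has $\Lset_n \subseteq \{M_\infty, M_\infty + 1, \dots, 0\}$ for every $n$, hence $M_\infty \le M_n \le R_n \le 0$ and $L_n = |\Lset_n| \le 1 + |M_\infty|$ for all $n$ simultaneously.

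I expect the only real content to be the characterization in the first step — specifically the observation that a path landing on a negative vertex $m$ must reach it through a single edge out of a positive vertex, because negative vertices have no outgoing edges. This is what decouples the events ``$m$ is used'' across $v$ and makes the first-moment bound summable; after that, the argument is a routine union bound plus Borel--Cantelli, and the hypothesis $\E Z < \infty$ is used only through the convergence of $\sum_k p_k$.
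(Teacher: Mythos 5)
Your proof is correct and takes essentially the same approach as the paper: both hinge on the observation that a non-positive vertex $m$ can only be reached via a single jump $v-Z_v=m$ or $v-W_v=m$ from some positive $v$, followed by a first-moment union bound controlled by $\E Z=\sum_k p_k<\infty$. The paper bounds $\PP(M_\infty\le -x)\le 2\sum_n p_{n+x}$ and lets $x\to\infty$, while you instead sum $\PP(m\text{ used})$ over $m$ and invoke Borel--Cantelli; these are minor reorganizations of the same argument.
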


\begin{proof}
Note that
\begin{align*}
    \prob{\leftmost_\infty \le -x} \le 2\sum_{n=1}^{\infty}\prob{n-Z_n\le -x}=2\sum_{n=1}^\infty p_{n+x}~.
\end{align*}
Since $\E Z < \infty $, we know that $\sum_{n=1}^\infty p_{n+x}$ is finite and goes to $0$ as $x\rightarrow +\infty$. Thus, by continuity of measure,
\begin{align*}
    \prob{\leftmost_\infty = -\infty} = \lim_{x\rightarrow +\infty} \prob{\leftmost_\infty \le -x}=0~.
\end{align*}
Since $|\leftmost_n|\ge \leaves_n$ and $|\leftmost_n|\ge |\rightmost_n|$, this also implies that $(\leaves_n)_{n\in \N}$ and $(\rightmost_n)_{n\in\N}$ are almost surely bounded.
\end{proof}

\subsubsection*{Moderate and heavy tails}
When $\E \min (Z,W) = \infty$, a different behavior emerges:

\begin{theorem}\label{a.s. thm Emin=infty}
    Let $Z$, $W$, $(Z_n)_{n\ge 1}$ and $(W_n)_{n\ge 1}$ be i.i.d.\
    random variables. Assume furthermore that $\E \min (Z,W) =
    \infty~.$ Then, with probability one,
  $$
  \liminf_{n\to \infty} \leaves_n < \infty~.
  $$
%  $$\leaves_n \nrightarrow \infty \textit{ as } n \to \infty.$$
In other words, the system is not strongly consistent in the
heavy-tailed regime in configuration (iii).
  \end{theorem}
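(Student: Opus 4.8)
The plan is to exhibit, almost surely, an infinite set of indices $n$ along which $L_n$ stays bounded by a constant; since $\liminf_{n\to\infty} L_n<\infty$ just means that $L_n$ does not tend to infinity, this suffices. The natural candidate is the set of ``stubby'' vertices, those $n$ for which \emph{both} outgoing edges already land among the leaves.

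First I would translate the moment hypothesis into a summability statement. Since $\min(Z,W)$ is a positive integer-valued random variable and $Z,W$ are independent,
$$
\E \min(Z,W) = \sum_{n\ge 1} \PP\bigl(\min(Z,W)\ge n\bigr) = \sum_{n\ge 1} \PP(Z\ge n)\,\PP(W\ge n) = \sum_{n\ge 1} p_n^2~,
$$
so $\E\min(Z,W)=\infty$ is equivalent to $\sum_{n\ge 1} p_n^2 = \infty$. Next, for $n\ge 1$ define the event $A_n=\{Z_n\ge n\}\cap\{W_n\ge n\}$. The pairs $(Z_n,W_n)$ are independent across $n$, so the events $(A_n)_{n\ge 1}$ are independent, with $\PP(A_n)=p_n^2$. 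By the divergence just established and the second Borel--Cantelli lemma, $A_n$ occurs for infinitely many $n$, almost surely.

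Finally I would observe that on $A_n$ we have $n-Z_n\le 0$ and $n-W_n\le 0$, hence $T_{n-Z_n}=\{n-Z_n\}$ and $T_{n-W_n}=\{n-W_n\}$, so that $T_n=\{n,\,n-Z_n,\,n-W_n\}$ and $\Lset_n=\{n-Z_n,\,n-W_n\}$; thus $L_n=|\Lset_n|\le 2$ whenever $A_n$ holds. Combining with the previous step, $L_n\le 2$ for infinitely many $n$ almost surely, which gives $\liminf_{n\to\infty}L_n\le 2<\infty$ almost surely, and the last sentence of the theorem is just the restatement that strong consistency (in configuration (iii), $L_n\to\infty$) fails. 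There is no real obstacle here: the only points requiring care are the identity $\E\min(Z,W)=\sum_n p_n^2$ and the remark that the ``good'' events $A_n$ are genuinely independent, so that the easy direction of Borel--Cantelli applies with no correlation estimates needed.
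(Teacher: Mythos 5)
Your proof is correct and follows essentially the same approach as the paper: both observe that on the event $\{\min(Z_n,W_n)\ge n\}$ one has $L_n\le 2$, note these events are independent across $n$ with probabilities summing to $\E\min(Z,W)=\infty$, and invoke the second Borel--Cantelli lemma. The only cosmetic difference is that you rewrite $\PP(\min(Z_n,W_n)\ge n)=p_n^2$ explicitly, whereas the paper leaves it as $\sum_n\PP(\min(Z_n,W_n)\ge n)=\E\min(Z,W)$.
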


\begin{proof}
    Note that if $\min (Z_n, W_n) \ge n$, then $\leaves_n\in \{1,2\}$. Thus,
    to prove Theorem \ref{a.s. thm Emin=infty}, it suffices to show
    that with probability one, $\min(Z_n,W_n) \ge n$ happens
    infinitely often. This follows by the second Borel-Cantelli lemma since the events 
    $$(\{\min(Z_n, W_n)\ge n\})_{n\ge 0}$$
    are independent and 
    $$\sum_{n\ge 0} \prob{\min(Z_n, W_n)\ge n} = \E \left( \min (Z, W) \right)=\infty~.$$
\end{proof}

The behavior of the set $\Lset_n$ for moderate and heavy tails is
better understood by examining its extreme points $\leftmost_n$ and $\rightmost_n$.
The following two theorems are proved in Section \ref{sec:rlleaves}.

\begin{theorem} \label{M diverges a.s. }
    If $q_i>0$ for all $i\in \N$ and $\E Z=\infty$, then, with
    probability one, $$\leftmost_n\to -\infty~.$$
Thus, in configuration (ii) the system is strongly consistent in the
moderate and heavy-tailed regimes.
    \end{theorem}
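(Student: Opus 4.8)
The plan is to show that for every fixed $x > 0$, the event $\{M_n \le -x\}$ occurs for all sufficiently large $n$, almost surely; since $x$ is arbitrary this gives $M_n \to -\infty$. The natural mechanism is the $Z$-chain $T_n^Z$: recall that $T_n^Z \subseteq T_n$, so $\min T_n^Z \ge M_n$ in the sense that $M_n \le \min(T_n^Z \cap \{0,-1,\dots\})$ whenever the $Z$-chain reaches a non-positive vertex, and more usefully, if the $Z$-chain from $n$ passes through \emph{and crosses below} level $-x$ then automatically $M_n \le -x$. So it suffices to show that, almost surely, for all large $n$ the $Z$-chain starting at $n$ eventually lands at some vertex in $\{-x-1, -x-2, \dots\}$ (equivalently, does not hit the finite set $\{0,-1,\dots,-x\}$ as its terminal leaf, or overshoots it). Actually the cleanest route: it suffices that the $Z$-chain from $n$ reaches \emph{some} leaf strictly below $-x$, and I will bound the probability of the complementary event — that the $Z$-chain from $n$ has its terminal leaf in the finite window $\{0,-1,\dots,-x\}$ — and show these bad events are summable in $n$, then apply Borel--Cantelli.

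First I would set up the one-dimensional renewal-type structure of the $Z$-chain. Write the $Z$-chain from $n$ as the decreasing sequence $n = S_0 > S_1 > \cdots$ where $S_{k+1} = S_k - Z'_k$ for an i.i.d.\ copy $(Z'_k)$ of $Z$, stopped when it first becomes non-positive. The terminal leaf is $S_\tau$ where $\tau = \min\{k : S_k \le 0\}$, and $S_\tau$ lands in $\{0,-1,\dots,-x\}$ precisely when the overshoot of the renewal walk below $0$ is at most $x$. Using the recursion \eqref{r recursion} and the quantity $r_n = \PP(0 \in T_n^Z)$, one has $\PP(S_\tau = -j) = \sum_{i} r_{n-\cdot}\,q_\cdot$-type expressions; concretely $\PP(S_\tau \in \{0,-1,\dots,-x\}) = \sum_{m=0}^{n-1} r_{n-1-m}\,\PP(m < Z \le m + 1 + x)\,$ summed appropriately, i.e.\ it equals $\sum_{m} r_m\, \PP(\text{jump lands in the window})$. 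The point is that since $\E Z = \infty$ we have $r_n \to 0$ (this is exactly the moderate/heavy-tailed behavior noted in Remark \ref{pareto}, and in general $\E Z = \infty$ forces $r_n \to 0$ by the renewal theorem), and the window has fixed width $x+1$, so $\PP(S_\tau \in \{0,\dots,-x\}) \to 0$ as $n \to \infty$.

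The issue is that $\PP(S_\tau \in \{0,\dots,-x\}) \to 0$ is not by itself summable, so a direct Borel--Cantelli on the single $Z$-chain fails; this is the main obstacle. I would circumvent it by using \emph{many independent attempts}. The vertex $n$ has two children $n - Z_n$ and $n - W_n$, each launching its own subtree, and more to the point, along the chain $S_0, S_1, \dots$ each vertex $S_k$ also has a $W$-edge spawning an independent $Z$-chain (in the independent $W$-copy of the forest), and these give many roughly independent chances for \emph{some} descendant leaf to fall below $-x$. A clean way to organize this: fix a slowly growing cutoff, say look at the first $k_n = \lfloor \log n \rfloor$ vertices of the $Z$-chain from $n$ (which lie above $n - O(k_n \cdot \text{typical jump})$, in any case are distinct vertices with high probability for large $n$), and at each such vertex use the independent $W$-subtree; the probability that \emph{none} of these independent subtrees reaches a leaf below $-x$ is at most $\big(1 - c_x(n)\big)^{k_n}$ where $c_x(n)$ is a lower bound on a single subtree reaching below $-x$, and since $c_x(n)$ stays bounded away from $0$ along a positive fraction of these vertices (as those vertices have index growing with $n$, and reaching below a \emph{fixed} level $-x$ from a large index has probability bounded below — indeed it tends to $1$), the bound becomes summable in $n$. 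Then Borel--Cantelli gives that almost surely $M_n \le -x$ for all large $n$, and intersecting over $x \in \N$ finishes the proof. The delicate point to get right is the uniform-in-$n$ lower bound that a single subtree rooted at a vertex of large index reaches below the fixed level $-x$; I expect this to follow from monotonicity (larger starting index only helps) together with the fact that the $Z$-chain is transient downward when $\E Z = \infty$ so it almost surely overshoots any fixed finite window, combined with $q_i > 0$ for all $i$ to guarantee positive probability of any needed single step.
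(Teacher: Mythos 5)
Your argument takes a different and considerably more involved route than the paper's, and the route has a real gap: the ``many independent attempts'' step does not go through as stated. The $W$-chains $T^W_{S_0}, T^W_{S_1}, \dots$ launched from successive vertices of the $Z$-chain are not independent of one another. Two such chains can coalesce at a common vertex, after which they share the same terminal leaf; in particular the failure events $\{\min T^W_{S_k} \ge -x\}$ are positively correlated, and nothing in the proposal rules out that all $k_n$ chains merge into one. So the bound $\prob{\text{all attempts fail}} \le (1-c)^{k_n}$ is not justified, and that bound is the entire engine of your summability rescue. Even granting independence, with $k_n = \lfloor \log n\rfloor$ you would need the single-attempt success probability $c$ to exceed $1 - e^{-1}$, uniformly over the launch vertices; but the indices $S_0 > S_1 > \cdots$ decrease along the $Z$-chain and can become small or non-positive after few steps, so the ``delicate point'' you flag (a uniform-in-$n$ lower bound) is a genuine unresolved obstruction, not a detail.

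More importantly, the extra machinery is unnecessary, and this is where your approach diverges from the paper. The paper's proof is a short pigeonhole argument built directly on Theorem \ref{th2 surf}, the SURF result that when $\E Z = \infty$ and $q_i>0$ for all $i$, almost surely every $i \le 0$ belongs to $T^Z_n$ for only finitely many $n$. That statement is strictly stronger than $r_n \to 0$, and it immediately forces the sequence $(\min T^Z_n)_{n\ge1}$, which takes values in $\{0,-1,-2,\dots\}$, to visit each value only finitely often and hence to diverge to $-\infty$ almost surely; then $M_n \le \min T^Z_n$ finishes. This uses only the $Z$-edges --- no Borel--Cantelli, no second-moment control, and no $W$-subtrees at all. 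Your plan tries to re-derive the conclusion from the weaker fact $r_n \to 0$, which is exactly why you hit the non-summability wall; the cure is to invoke the available stronger structural result from \cite{surf}, not to route through the two-choice structure.
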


Strong (in-)consistency in configuration (i) is established in the
next result:

\begin{theorem}\label{a.s thm Emin<infty}
    Let $Z$, $W$, $(Z_n)_{n\ge 1}$ and $(W_n)_{n\ge 1}$ be i.i.d.\ random variables. 
    \begin{enumerate}%[label=(\roman*)]
        \item If $\E \min(Z,W)<\infty$, then $\sup_{n\in\N} |\rightmost_n| < \infty$  almost surely.
        \item If $\E \min(Z,W) = \infty$, then $\sup_{n\in\N} |\rightmost_n| = \infty$ almost surely.
    \end{enumerate}
    
\end{theorem}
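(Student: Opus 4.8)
The plan is to prove the two cases separately: the finite-mean case reduces to the single-choice results of \cite{surf}, while the infinite-mean case follows from the second Borel--Cantelli lemma.

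\emph{Case 1: $\E\min(Z,W)<\infty$.} The key point is that the shortest-edge chains already bound $R_n$ from below, uniformly in $n$. For $n\ge1$ let $\sigma(n)$ denote the unique non-positive vertex on the chain $S_n$, i.e.\ the point at which $S_n$ first leaves $\N$. Since $S_n\subseteq T_n$ and $\sigma(n)\le 0$, we have $\sigma(n)\in\Lset_n$, hence $R_n=\max\Lset_n\ge\sigma(n)$; as $R_n\le 0$ this gives $|R_n|\le-\sigma(n)$ for every $n\ge1$, so it suffices to prove $\inf_{n\ge1}\sigma(n)>-\infty$ almost surely. Now the recursion $S_n=\{n\}\cup S_{n-\min(Z_n,W_n)}$ shows that the $S_n$ are exactly the chains joining each vertex to the root of its tree in the single-choice subtractive forest built from the i.i.d.\ step sequence $(\min(Z_n,W_n))_{n\ge1}$, whose common law has finite mean by assumption; thus $\sigma(n)$ is the root of the tree containing $n$ in that forest. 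I would then invoke the light-tailed regime of \cite{surf} for this forest: almost surely there is a random $N$ after which all vertices belong to one common tree, so $\sigma(n)=\ell^*$ for a fixed (random) leaf $\ell^*$ and all $n\ge N$ (in the periodic case one applies this within each residue class, obtaining finitely many limiting leaves). Consequently $\{\sigma(n):n\ge1\}\subseteq\{\ell^*,\sigma(1),\dots,\sigma(N-1)\}$ is almost surely finite, whence $\inf_{n\ge1}\sigma(n)>-\infty$ and $\sup_{n\ge1}|R_n|<\infty$ almost surely.

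\emph{Case 2: $\E\min(Z,W)=\infty$.} Fix $K\in\N$ and let $E_n^{(K)}=\{\min(Z_n,W_n)\ge n+K\}$ for $n\ge1$. The events $E_n^{(K)}$ are independent, each depending only on the pair $(Z_n,W_n)$, and since $\min(Z,W)$ is non-negative and integer-valued with infinite mean,
\[
\sum_{n\ge1}\PP\!\left(E_n^{(K)}\right)=\sum_{n\ge1}\PP\!\left(\min(Z,W)\ge n+K\right)=\sum_{m>K}\PP\!\left(\min(Z,W)\ge m\right)=\infty .
\]
By the second Borel--Cantelli lemma, almost surely $E_n^{(K)}$ occurs for infinitely many $n$; pick such an $n$. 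Then $n-Z_n\le-K\le0$ and $n-W_n\le-K\le0$, so $T_n=\{n,n-Z_n,n-W_n\}$, $\Lset_n=\{n-Z_n,n-W_n\}$, and $R_n=\max(n-Z_n,n-W_n)\le-K$, giving $\sup_{n\ge1}|R_n|\ge K$ almost surely. Intersecting over $K\in\N$ yields $\sup_{n\ge1}|R_n|=\infty$ almost surely.

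I expect the summability identity and the two Borel--Cantelli applications to be routine; the substantive step is the reduction in Case 1. A single shortest-edge chain can overshoot $0$ by an arbitrarily large (unbounded) amount, so the uniform lower bound on $R_n$ is not visible from one chain alone --- it comes from the coalescence phenomenon of \cite{surf}, which forces all but finitely many chains to terminate at the same leaf. The only mild technical nuisance I anticipate is dealing with a possible period of $\min(Z,W)$, where ``one common tree'' must be replaced by ``finitely many common trees, one per residue class.''
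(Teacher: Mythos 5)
Your proof is correct and follows essentially the same approach as the paper. For part~1 the paper also reduces to the one-choice forest built from $\min(Z,W)$ (Corollary~\ref{corollary surf}, which packages Theorem~1 of \cite{surf} applied to the $S_n$-chains) to produce a random $N$ with $N\in S_n$ for all $n\ge N$; the paper then concludes via the inclusion $T_N\subset T_n$, giving $|R_n|\le |R_N|$ for $n\ge N$, whereas you bound $|R_n|$ by $-\sigma(n)=-\ell^*$ — a marginally weaker but equally sufficient bound. For part~2 the paper uses the events $\{\min(Z_n,W_n)\ge 2n\}$ and second Borel--Cantelli, which is a single-shot version of your parametrized events $\{\min(Z_n,W_n)\ge n+K\}$ followed by intersecting over $K$; both are fine. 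One small remark worth making explicit: the coalescence result from \cite{surf} (and hence Corollary~\ref{corollary surf}) is stated under $q_1>0$, a hypothesis that neither the theorem statement nor the paper's proof of it displays; you correctly flag the periodic issue and sketch the residue-class workaround, which is a point of care the paper glosses over.
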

Given that the leftmost leaf in $\Lset_n$ goes to $-\infty$ and the rightmost leaf remains bounded in the moderate heavy-tail regime (i.e., when $\E Z = \infty$ and $\E \min(Z,W)<\infty$), one can expect that the total number of leaves reached by the vertex $n$ diverges under these two assumptions. Thus, strong consistency is observed in each of the three configurations (i), (ii), and (iii).

On the other hand, only configuration (i) leads to strong consistency
when the distribution of $Z$ has a heavy-tail (i.e., when $\E
\min(Z,W) = \infty$). However, when $\sum_{n=1}^\infty r_n^2 <
\infty$, the model remains weakly consistent in configuration (iii),
as established by the next two results. Theorems \ref{ L to infinity
  almost surely} and \ref{weak thm Emin=infty} are proved in  Sections
\ref{sec:moderate} and \ref{sec:heavy}, respectively.

\color{black}
\begin{theorem}\label{ L to infinity almost surely}
    Assume that the distribution of $Z$ is aperiodic, that is, the greatest common divisor of the support of $Z$ is $1$. If $Z$ exhibits a moderate-sized tail (i.e., $\E \min (Z,W)<\infty$ and $\E Z = \infty$), then 
    $$\leaves_n\to \infty \text{ almost surely. }$$
    \end{theorem}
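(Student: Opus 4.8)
\medskip
\noindent\textbf{Proof sketch.}
The plan is to exhibit, inside every $T_n$ with $n$ large, an explicit and unboundedly large family of leaves carried by the ``shortest--chain spine''. The maps $n\mapsto n-\min(Z_n,W_n)$ define a one-choice subtractive forest in the sense of \cite{surf}, with step distribution $\min(Z,W)$; this distribution has finite mean $\mu:=\E\min(Z,W)$ and, since $\mathrm{supp}(\min(Z,W))\supseteq\mathrm{supp}(Z)$, it too is not supported on a proper additive subgroup of $\Z$. Hence, by the finite-mean coalescence result of \cite{surf}, almost surely all but finitely many vertices of this forest lie in a single tree, and there is an a.s.\ infinite ``spine'' $P\subseteq\N$ with the properties that (i) below the vertex where $S_n$ first meets $P$ the chain $S_n$ runs along $P$, and (ii) that meeting level tends to $+\infty$ as $n\to\infty$; in particular, for each fixed $C$ one has $P\cap\{1,\dots,C\}\subseteq S_n\subseteq T_n$ for all $n$ large enough. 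Now at every $v\in P$ the two-choice graph carries, besides the spine edge $v\to v-\min(Z_v,W_v)$, the \emph{pendant} edge $v\to u_v$ with $u_v:=v-\max(Z_v,W_v)$, and $T_{u_v}\subseteq T_v\subseteq T_n$ as soon as $v\in S_n$; so $u_v\le0$ implies $u_v\in\Lset_n$ for all large $n$. Writing $\Lambda$ for the number of distinct non-positive integers among $\{u_v:v\in P\}$, intersecting over the windows $\{1,\dots,C\}$ gives $\liminf_n L_n\ge\Lambda$ almost surely, so it suffices to prove $\Lambda=\infty$ a.s.

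\medskip
To prove $\Lambda=\infty$ I would condition on $P$. Since $P$ and the spine steps $\delta_v:=\min(Z_v,W_v)$, $v\in P$, are measurable functions of the family $(\min(Z_v,W_v))_{v\ge1}$ alone, conditionally on $P$ the variables $(\max(Z_v,W_v))_{v\in P}$ are still independent, the one at $v$ having the law of $\max(Z,W)$ given $\min(Z,W)=\delta_v$. Two conditional Borel--Cantelli arguments then finish the job. First, \emph{infinitely many pendants are non-positive}: restricting to the positive-density set of $v\in P$ with $\delta_v=m_0:=\min\mathrm{supp}(Z)$, one has $\procond{\max(Z_v,W_v)\ge v}{P}\ge q_{m_0}\,p_v$ for $v>m_0$, and $\sum_v p_v=\E Z=\infty$ while $(p_v)$ is nonincreasing, so the conditional-probability sum over this set diverges; as the events $\{u_v\le0\}$ are conditionally independent, $u_v\le0$ for infinitely many $v\in P$. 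Second, \emph{for each fixed $B$ only finitely many pendants land in $[-B,0]$}: a routine union bound --- splitting $P$ according to its step size to absorb the conditioning --- shows $\sum_{v\in P}\procond{v\le\max(Z_v,W_v)\le v+B}{P}$ is finite for every $B$, whence $u_v\in[-B,0]$ for only finitely many $v\in P$. Combining the two, the non-positive values of $\{u_v:v\in P\}$ tend to $-\infty$, hence take infinitely many distinct values, i.e.\ $\Lambda=\infty$.

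\medskip
The analytic ingredients above are routine: the inheritance of a finite mean and of aperiodicity by $\min(Z,W)$, the density $1/\mu$ of the spine (a renewal statement), and the two elementary union bounds. The step I expect to cost the most effort is the structural input (i)--(ii): that the big tree of the $\min(Z,W)$-forest is one-ended, with a spine that every chain $S_n$ joins at a level diverging with $n$. Morally this holds because two shortest chains started far up coalesce after descending only $O(1)$ --- a standard renewal/coupling estimate --- and it should follow from, or be a mild strengthening of, the finite-mean analysis of \cite{surf}; it is precisely here that the hypothesis that $Z$ is not carried by a proper subgroup of $\Z$ is genuinely used (for, say, $Z$ supported on the even integers the forest would split and no such spine exists). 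A secondary point to treat carefully is the conditioning in the middle paragraph: one must record that the spine depends only on the $\min$-marginals of the pairs $(Z_v,W_v)$, so that the pendant jumps stay conditionally independent given $P$, and restricting to spine vertices of a prescribed step size $m_0$ keeps the conditional laws of $\max(Z,W)$ out of the estimates. (Incidentally, the argument also re-derives $M_n\to-\infty$ and the boundedness of $R_n$ in this regime, so the heuristic preceding the theorem is borne out.)
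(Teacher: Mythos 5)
Your approach is genuinely different from the paper's and, in outline, is an attractive one: the paper first establishes $L_n\to\infty$ \emph{in probability} via a second-moment computation on the statistic $J_n=\sum_{m\in S_n}\IND_{\max(Z_m,W_m)\ge m}$ (Lemmas~\ref{Divergence} and~\ref{Lemma second moment}, Proposition~\ref{weak behavior case 2}), and only then upgrades to almost-sure convergence using the shifted coalescence Corollary~\ref{corollary surf} and a short tail-probability bookkeeping argument with the random times $Y_n\le I_n\le n<N_n$. You bypass the second-moment step entirely and try to exhibit, along a single a.s.\ infinite spine $P$ of the $\min(Z,W)$-forest, infinitely many distinct pendant leaves $u_v=v-\max(Z_v,W_v)\le 0$. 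The structural input you need — that every chain $S_n$ merges into a fixed unbounded spine $P$, with merge level $\to+\infty$ — is indeed available and is exactly what the paper extracts from Corollary~\ref{corollary surf} by shifting; so the step you flag as the likely bottleneck is in fact the part that parallels the paper.

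The genuine gap is the claim you call ``a renewal statement'': that $P$ has positive density, or, what you actually need, that $\sum_{v\in P,\,v>0}p_v=\infty$ almost surely (after which the conditional Borel--Cantelli II gives infinitely many $v\in P$ with $u_v\le0$). This is \emph{not} a consequence of the Erd\H{o}s--Feller--Pollard renewal theorem as used in the paper: that theorem controls $v_n=\PP(0\in S_n)$, a probability about a single chain, whereas the spine $P$ is a derived, path-dependent object (the set of vertices in $S_m$ for all large $m$), and neither its density nor the law of its gaps is addressed anywhere in \cite{surf}. Since $(p_v)$ is only known to be nonincreasing with divergent sum, if the spine gaps were even occasionally super-linear the sum $\sum_{v\in P}p_v$ could converge, so this cannot simply be waved through. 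In effect the second-moment computation in the paper is doing precisely the quantitative work that this density claim is silently assuming; you have not avoided the technical heart of the proof, you have relocated it into an unproved lemma. Secondarily, your second Borel--Cantelli estimate (``only finitely many pendants land in $[-B,0]$'') is phrased as a conditional union bound ``splitting $P$ according to step size,'' but the conditional probabilities carry a factor $1/(p_{\delta_v}+p_{\delta_v+1})$ that is unbounded along the spine, and a naive split does not absorb it. Fortunately this part does not need the spine at all: for each fixed $k\le 0$ one has $\sum_{v\ge1}\PP(\max(Z_v,W_v)=v-k)\le 2\sum_{m\ge1}q_m<\infty$, so by the first Borel--Cantelli lemma each leaf $k\in[-B,0]$ is the pendant target of only finitely many $v\ge1$ (spine or not), and the desired finiteness follows by summing over the finitely many $k$. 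I'd suggest replacing your conditional argument for that step by this unconditional one, and then either supplying a genuine proof of the spine-density statement, or reverting to the paper's route of first proving convergence in probability and then using the coalescence bookkeeping.
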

\color{black}

\begin{theorem}\label{weak thm Emin=infty}
    When $Z$ has a heavy tail (i.e., $\E \min(Z,W) = \infty$), and $\sum_{n\ge0}r_n^2 < \infty~,$ then $$\leaves_n\to \infty \textit{ in probability.}$$
\end{theorem}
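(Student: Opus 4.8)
The plan is to prove the equivalent statement that $\prob{L_n\le K}\to 0$ for every fixed $K\in\N$. The engine of the argument is a coincidence estimate for the single-choice leaves. Write $\ell_m$ for the unique non-positive vertex lying on the pure $Z$-chain $T_m^Z$ (so $\ell_m\in\Lset_m$). I claim that for integers $a>b\ge 1$,
\begin{equation*}
  \prob{\ell_a=\ell_b}\ \le\ \sum_{u\ge 0}r_u\, r_{u+(a-b)}\ \le\ \Big(\sum_{u\ge 0}r_u^2\Big)^{1/2}\Big(\sum_{u\ge a-b}r_u^2\Big)^{1/2}.
\end{equation*}
To see this, first note that because the $Z_j$ are i.i.d., the $Z$-chain from $m$ is a renewal walk, so $\prob{v\in T_m^Z}=r_{m-v}$ for $1\le v\le m$, and the identity \eqref{r recursion} gives $\prob{\ell_m=k}=\sum_{j=0}^{m-1}r_j\,q_{m-j-k}\le r_{m-k}$ for every $k\le 0$. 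Conditioning on the chain from $b$ and using that the chain from $a$ runs on fresh variables until the first time it lands on the $b$-chain, a union bound over the possible meeting points (the interior vertices of the $b$-chain, and its leaf) yields the first inequality; the second is Cauchy--Schwarz. The crucial point is that, by the hypothesis $\sum_{n\ge 0}r_n^2<\infty$, the right-hand side tends to $0$ as $a-b\to\infty$; I will write $\eps(w)$ for a nonincreasing bound on it with $\eps(w)\to 0$. Because all edge lengths are i.i.d.\ copies of $Z$, the same estimate applies to the leaf reached by \emph{any} chain that follows an arbitrary pattern of $Z$- and $W$-steps before switching to pure $Z$-steps; and, conditionally on the $W$-chain $T_n^W$, the pure $Z$-leaves of the vertices lying on $T_n^W$ are measurable functions of the independent family $(Z_j)_j$, so the estimate applies to them as well. (For $K=1$ this already suffices, since $\ell_n^Z$ and $\ell_n^W$ are independent: $\prob{L_n=1}\le\prob{\ell_n^Z=\ell_n^W}=\sum_{k\le 0}\prob{\ell_n^Z=k}^2\le\sup_{m\ge n}r_m\to 0$.)

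For general $K$ I would proceed as follows. Fix a scale $g=g(n)\to\infty$, to be chosen, and among the vertices lying on the $W$-chain $T_n^W$ keep a maximal subset $v_1>v_2>\dots>v_D\ge 1$ such that consecutive retained vertices differ by at least $g$; thus $D=D(n)$ is a function of the $W$-chain. Each pure $Z$-leaf $\ell_{v_i}$ lies in $\Lset_n$, and the coincidence estimate gives, conditionally on $T_n^W$,
\begin{equation*}
  \espcond{\#\{(i,j):i<j,\ \ell_{v_i}=\ell_{v_j}\}}{T_n^W}\ \le\ \binom{D}{2}\,\eps(g).
\end{equation*}
By Cauchy--Schwarz applied to the leaf multiplicities, $L_n\ge\big|\{\ell_{v_1},\dots,\ell_{v_D}\}\big|\ge D^2/(D+2Y)$, where $Y$ is the number of coincident pairs, so on the event $\{Y\le D/2\}$ one has $L_n\ge D/2$. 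It is therefore enough to choose $g(n)\to\infty$ so that $D\,\eps(g)\to 0$ in probability and $D\to\infty$ in probability: the former, combined with Markov's inequality applied conditionally on $T_n^W$, forces $\procond{Y\le D/2}{T_n^W}\to 1$, and together with $D\to\infty$ this gives $L_n\to\infty$ in probability.

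The hard part is the second requirement --- that the $W$-chain from $n$ passes through arbitrarily many $g$-separated vertices with probability tending to $1$ --- which is genuinely delicate precisely because $Z$ (and hence $W$) is heavy-tailed, so the chain can clear a large portion of $[1,n]$ in a single jump. This is a renewal-theoretic statement: with $g(n)=n^{\beta}$, one needs that the number of blocks $[\ell g,(\ell+1)g)$ meeting $T_n^W$ is of order $(n/g)^{c}$ for some $c>0$ with high probability. The inputs are the quantitative decay $r_n=\Theta(n^{\alpha-1})$ (which under $\sum_n r_n^2<\infty$ corresponds to $\alpha<1/2$, cf.\ Remark~\ref{pareto}) together with a separate bound on the total distance swept by the atypically large jumps of the chain; the admissible exponents $\beta$ are pinned down by balancing this lower bound on $D$ against $\eps(g)\asymp\big(\sum_{u\ge g}r_u^2\big)^{1/2}$, and $\sum_n r_n^2<\infty$ is exactly what makes the two requirements simultaneously satisfiable. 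Finally, I would invoke $M_n\to-\infty$ from Theorem~\ref{M diverges a.s. } to confirm that the retained leaves are spread over a window growing with $n$, so the bound on $L_n$ is not degenerate.
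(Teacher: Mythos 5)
Your coincidence estimate and the idea of picking well-separated vertices along a one-choice chain and applying a second-moment/pair-counting argument are exactly the paper's mechanism. The gap is in the quantification you postpone at the end: you let the separation scale $g=g(n)\to\infty$ and then need the number $D(n)$ of $g$-separated vertices on the chain to go to infinity in probability while $D(n)\,\eps(g(n))\to 0$. Establishing this simultaneous double limit under the bare hypothesis $\sum_n r_n^2<\infty$ is not just ``delicate'' --- your sketch appeals to Pareto exponents ($r_n=\Theta(n^{\alpha-1})$), which Remark~\ref{pareto} offers only as an illustration, and even there the balancing of $\beta$ against $\alpha$ has to be done carefully. As written, the argument does not cover the theorem's actual hypothesis.

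The paper avoids this entirely by \emph{not} letting the window width grow with $n$. It fixes $\ell$ and $\eps$, chooses $w$ large but \emph{constant} so that the pairwise collision bound $\binom{\ell}{2}\bigl(\sum_{i\ge 0}r_i^2\bigr)^{1/2}\bigl(\sum_{i\ge w}r_i^2\bigr)^{1/2}\le\eps$, and then only needs the chain $T_n^Z$ to intersect at least $2\ell$ intervals of the fixed length $w$. For fixed $w$ and $\ell$ this is trivial: if fewer than $2\ell$ intervals are hit, the whole chain lies within distance $2\ell w$ of $n$, forcing $Z_1+\dots+Z_{\lfloor 2\ell w\rfloor}>n$, an event whose probability goes to $0$ as $n\to\infty$ regardless of the tail. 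From those $2\ell$ intervals you extract $\ell$ vertices pairwise $w$-separated, and the pair-counting argument gives $L_n\ge\ell$ with probability at least $1-\eps-o(1)$. Since $\ell$ and $\eps$ are free, this finishes the proof. So the fix for your argument is precisely this: freeze $g$ at a constant $w$, replace the requirement ``$D\to\infty$'' with ``$D\ge 2\ell$ eventually in probability for each fixed $\ell$,'' and run the whole argument at fixed $\ell$. Then no renewal estimate of the type you were worried about is needed, and the hypothesis $\sum_n r_n^2<\infty$ enters only through the tail of $\sum_{i\ge w}r_i^2$ as $w\to\infty$.

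One smaller remark: the concluding appeal to $M_n\to-\infty$ (Theorem~\ref{M diverges a.s. }) is unnecessary. The non-degeneracy of the count follows directly from the vertices being $w$-separated along a one-choice chain, which prevents the collision bound from being vacuous; $M_n\to -\infty$ is neither needed nor used in the paper's proof.
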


We recall from Remark \ref{pareto} that the summability of $r_n^2$ is assured for nearly all
heavy-tailed $Z$.

\subsection{Summary}\label{sec:summary}

The following table summarizes our findings. 
Observe that weak and strong behavior coincide in most cases.
They only differ in the extreme heavy tail case ($\E \min(Z,W) = \infty$). For an optimal user experience, one would need $\leaves_n \to \infty$ almost surely, and the only case that assures this is when we have moderate tails. In addition, such moderate tails also guarantee strong consistency for the decreasing and increasing input scenarios (as is apparent from the strong consistency of $\leftmost_n$ and $\rightmost_n$).
\vspace{.5cm}

\begin{center}

\begin{tabular}{|c||c|c|c|}
        \hline
        & $\E Z<\infty$ & \vtop{\hbox{\strut $\E Z = \infty$ and}\hbox{\strut $\E\min(Z,W)<\infty$}} & $\E\min(Z,W)=\infty$\\ 
        \hline \hline
        &\multicolumn{3}{|c|}{Strong consistency} \\
        \hline
        $\leftmost_n\to -\infty $ a.s. & no (Thm. \ref{M bounded as}) \ & yes (Thm. \ref{M diverges a.s. }) & yes (Thm. \ref{M diverges a.s. })\\ 
        \hline
        $\leaves_n\to \infty $ a.s.& no (Thm. \ref{M bounded as}) & yes (Thm. \ref{ L to infinity almost surely}) & no (Thm. \ref{a.s. thm Emin=infty})\\ 
        \hline
        $\rightmost_n$ bounded a.s. & yes (Thm. \ref{M bounded as}) & yes (Thm. \ref{a.s thm Emin<infty})  & no (Thm. \ref{a.s thm Emin<infty})\\ 
        \hline \hline 
        &\multicolumn{3}{|c|}{Weak consistency} \\
        \hline 
        $\leftmost_n \overset{\PP}{\to} -\infty$ & no (Thm. \ref{M bounded as})  & yes (Thm. \ref{M diverges a.s. }) & yes (Thm. \ref{M diverges a.s. })\\ 
        \hline
        $\leaves_n \overset{\PP}{\to} \infty$ & no (Thm. \ref{M bounded as}) & yes (Prop. \ref{weak behavior case 2}) & yes if $\sum r_n^2 < \infty$ (Thm. \ref{weak thm Emin=infty})\\ 
        \hline
        $\rightmost_n$ is tight & yes (Thm. \ref{M bounded as}) & yes (Thm. \ref{a.s thm Emin<infty}) & \\ 
        \hline
\end{tabular} 
\end{center}

  The rest of the paper contains the proofs of the results stated above.
In Section \ref{sec:onechoice} we recall some properties of the
single-choice model that are useful in our analysis.
In Section \ref{sec:rlleaves}, Theorems \ref{M diverges a.s. } and
\ref{a.s thm Emin<infty} are proven.
The main technical content of the paper is presented in Section
\ref{sec:numberofleaves}
where the number of leaves is examined in both the moderate-, and 
heavy-tailed cases, culminating in the proofs of Theorems \ref{ L to infinity
  almost surely} and \ref{weak thm Emin=infty}.

\subsection{Simulations}
\color{black}
To complement our theoretical analysis, we conducted a series of simulations to examine the empirical behavior of the model. We considered discrete Pareto distributions for the random variables $Z$ and $W$, with probabilities $q_n = c/n^{1+\alpha}$, so that varying $\alpha$ produces light-tailed, moderate-tailed, and heavy-tailed regimes. Our analysis focused on the evolution of the preferential value $V_n$ of the topic recommended at time $n$ in configuration~(iii), where topics are assigned independent uniform preference values.  

Figure~\ref{fig: Vn simulation} presents single realizations of $V_n$ for each tail regime, illustrating the emergence (or absence) of strong consistency. Figure~\ref{fig:weak} shows the corresponding averages over $1000$ independent simulations, which emphasize the form of weak consistency predicted in the heavy-tailed setting. The simulations reveal three qualitatively distinct behaviors:
\begin{itemize}
    \item \textbf{Light-tailed regime ($\alpha = 1.2$):} no evidence of consistency; recommendations stabilize around a single topic, and $V_n$ does not converge to $0$.
    \item \textbf{Moderate-tailed regime ($\alpha = 0.6$):} strong consistency, with the process converging reliably to a dominant topic.
    \item \textbf{Heavy-tailed regime ($\alpha = 0.3$):} strong inconsistency at the trajectory level, but weak consistency visible in the averaged dynamics, in agreement with Remark~\ref{pareto}.
\end{itemize}

\begin{figure}[H]
  \centering
  % left figure
  \begin{minipage}[t]{0.49\textwidth}
    \centering
    \includegraphics[width=\linewidth]{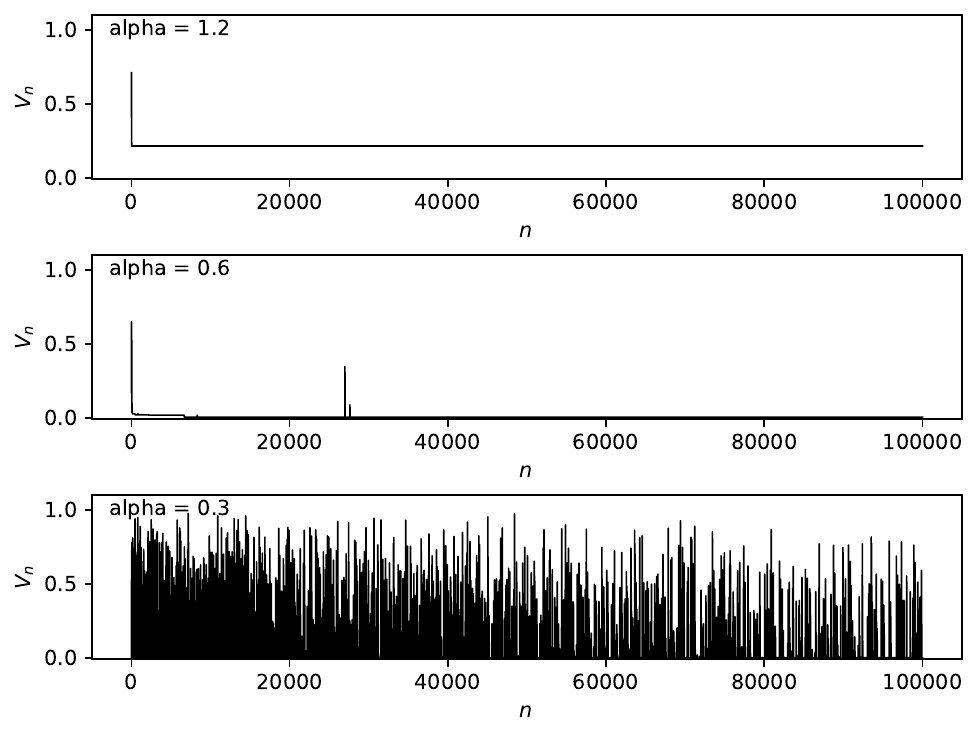}
    \caption{Simulation of the evolution of $V_n$ in configuration (iii) for a single run across light-tailed, moderate-tailed, and heavy-tailed regimes.}
    \label{fig: Vn simulation}
  \end{minipage}
  \hfill
  % right figure
  \begin{minipage}[t]{0.49\textwidth}
    \centering
    \includegraphics[width=\linewidth]{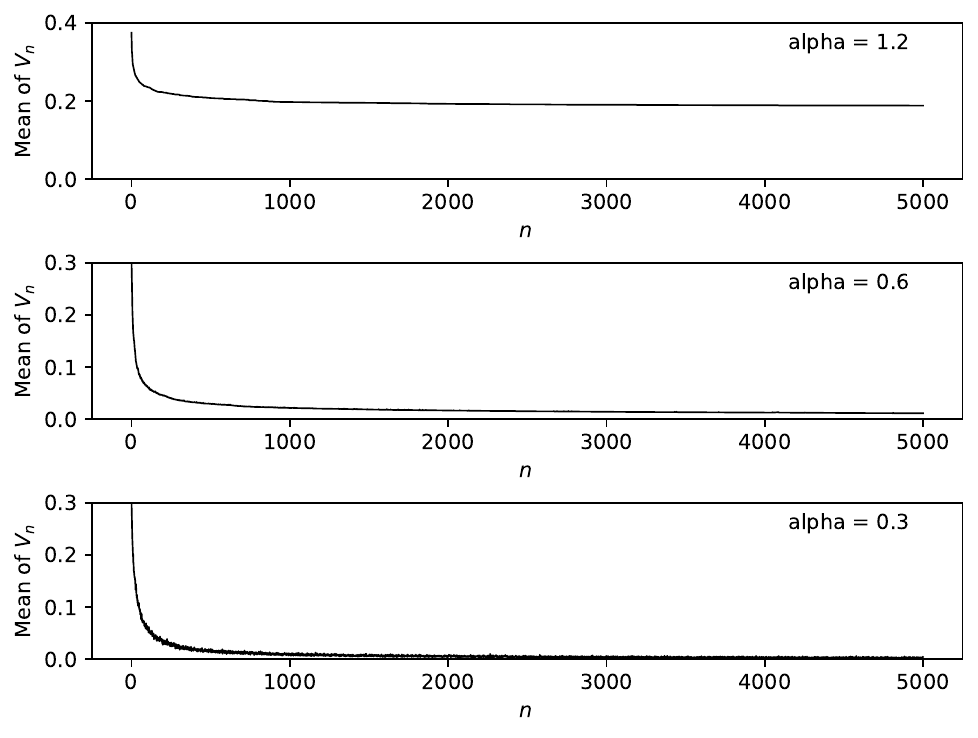}
    \caption{Evolution of the empirical mean of $V_n$ in configuration (iii)$~$over $1000$ independent simulations for light-tailed, moderate-tailed, and heavy-tailed regimes.}
    \label{fig:weak}
  \end{minipage}
\end{figure}
Configurations~$(i)$ and~$(ii)$ were also examined, and the corresponding simulation outcomes are in full agreement with the theoretical results established for these regimes. Since their qualitative behavior is straightforward and consistent with the analytical predictions, the corresponding plots are omitted for brevity.

In addition to tracking the evolution of $V_n$, it is also instructive to visualize the subgraph induced by the vertices $T_n$, consisting of the vertices reached by vertex $n$. Coloring vertices with positive index in blue and vertices with non-positive index (leaves or topics) in red provides a structural perspective on the process. The resulting plots (see Figure~\ref{fig: Tn visualization}) make the differences between the three regimes strikingly clear: in the light-tailed case, the short edges yield a dense graph with many blue vertices but very few leaves; in the moderate-tailed case, the number of leaves grows substantially while blue vertices remain numerous; and in the heavy-tailed case, the dominance of long edges produces a sparse structure with very few blue vertices and a predominance of leaves. These observations are fully consistent with our theoretical analysis and complement the intuition provided by the $V_n$ plots.

\begin{figure}[H]
\centering
\includegraphics[scale=0.45]{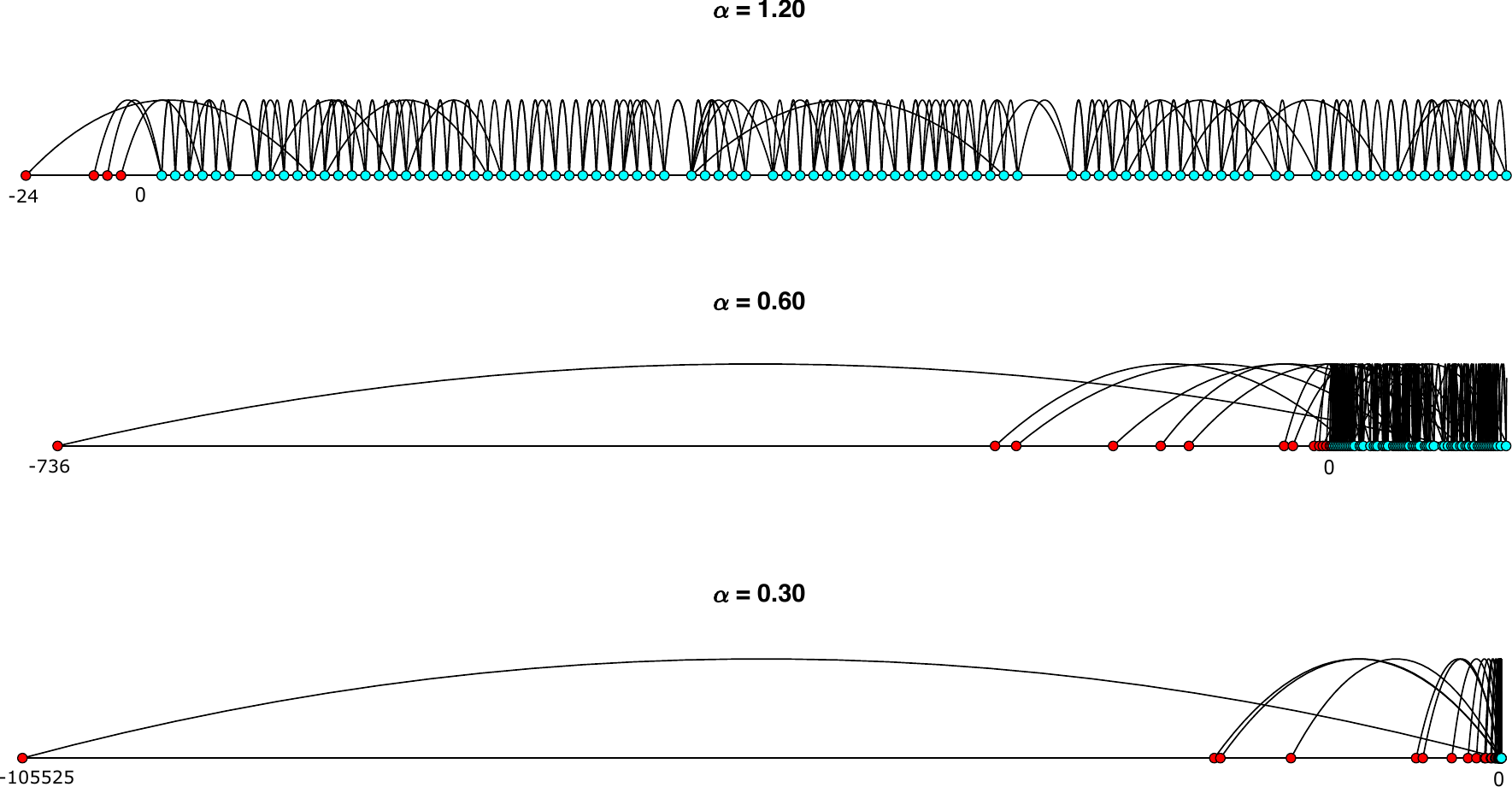}

\caption{Visualization of the subgraph induced by $T_n$ in the light-, moderate-, and heavy-tailed regimes, with $n = 150$.}
    \label{fig: Tn visualization}
\end{figure}
\color{black}

\section{The one-choice model}
\label{sec:onechoice}

Some of our proofs use the observation that the graph contains several copies of the subtractive random forest studied in \cite{surf}. For example, Theorem 1 in \cite{surf} states that there is a unique infinite tree in the forest when $\E Z < \infty$:

\begin{theorem}[Broutin, Devroye, Lugosi, Oliveira \cite{surf}]\label{th1 surf}
Let $Z$ and $(Z_n)_{n\ge 1}$ be i.i.d.\ random variables. Assume $\E Z < \infty$ and $q_1>0~.$ Then there exists a positive
random variable $N$ with $\prob{N<\infty}=1$ such that, with
probability one,
$$N\in T_n^Z  \quad \textit{for all } n\ge N~.$$
\end{theorem}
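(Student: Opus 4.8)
The plan is to isolate a simple sufficient condition for an integer $k$ to lie on every $Z$-chain that starts at or above $k$, and then to show that infinitely many integers satisfy this condition almost surely; the smallest positive one among them will serve as $N$. Call $k\ge 0$ \emph{dominant} if $k\in T_n^Z$ for every $n\ge k$, and write $E_k$ for this event. The first step is the combinatorial identity
\[
E_k=\bigl\{\,Z_v\le v-k\ \text{ for all } v>k\,\bigr\}.
\]
Indeed, if $Z_v\le v-k$ for all $v>k$, then the strictly decreasing chain from any $n\ge k$ moves, as long as it sits strictly above $k$, from a vertex $v>k$ to $v-Z_v\ge k$, so it never jumps below $k$ and must therefore hit $k$ exactly. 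Conversely, if $Z_{v_0}>v_0-k$ for some $v_0>k$, the chain from $v_0$ steps directly to $v_0-Z_{v_0}<k$ and stays below $k$ thereafter, so $k\notin T_{v_0}^Z$ even though $v_0\ge k$, and $k$ is not dominant.

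Next I would compute $\prob{E_k}$. Since the events $\{Z_v\le v-k\}$ over $v>k$ are independent, the identity yields
\[
\prob{E_k}=\prod_{j\ge 1}\prob{Z\le j}=\prod_{j\ge 1}(1-p_{j+1})=:c,
\]
a value that does not depend on $k$. This $c$ is strictly positive: $\sum_{j\ge 1}p_{j+1}=\E Z-1<\infty$ since $\E Z<\infty$, and each factor $1-p_{j+1}=\prob{Z\le j}$ is positive — for $j=1$ this is precisely the hypothesis $q_1>0$, and for $j\ge 2$ it is automatic. Hence $c\in(0,1)$.

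The last step upgrades ``$\prob{E_k}=c$ for all $k$'' to ``$E_k$ occurs for infinitely many $k$, almost surely''. The event $\{E_k\text{ infinitely often}\}=\bigcap_{m}\bigcup_{k\ge m}E_k$ is a tail event of the independent sequence $(Z_v)_{v\ge 1}$, because for each $m$ every $E_k$ with $k\ge m$, and hence $\bigcup_{k\ge m}E_k$, is measurable with respect to $\sigma(Z_{m+1},Z_{m+2},\dots)$. By Kolmogorov's zero-one law its probability is $0$ or $1$; since moreover $\prob{\bigcup_{k\ge m}E_k}\ge c$ for every $m$, letting $m\to\infty$ shows it is at least $c>0$, hence exactly $1$. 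So almost surely infinitely many integers $k\ge 0$ are dominant, and $N:=\min\{k\ge 1:E_k\text{ holds}\}$ is a positive, almost surely finite random variable with $N\in T_n^Z$ for all $n\ge N$.

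I do not expect a real obstacle: the only delicate points are the two inclusions in the identity for $E_k$ (essentially off-by-one bookkeeping), and the observation that $q_1>0$ is exactly what keeps the $j=1$ factor of $c$ positive — matching the fact that if $q_1=0$ no vertex can be dominant, since reaching $k$ from $k+1$ forces $Z_{k+1}=1$.
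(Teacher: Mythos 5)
The paper does not prove this statement: it is imported verbatim from Broutin, Devroye, Lugosi and Oliveira \cite{surf}, and the Remark that follows only observes that the form stated here (with $N\in T_n^Z$, rather than merely $C_n=C_N$ for $n\ge N$) is a slight strengthening that the proof in \cite{surf} delivers after a cosmetic change. So there is no in-paper proof to compare against, but your self-contained argument is correct and is the natural one. The identity $E_k=\{Z_v\le v-k\ \text{for all } v>k\}$ is exactly right: the forward inclusion is the monotone-descent argument you give (each step keeps the chain strictly decreasing and $\ge k$, so it must land on $k$), and the converse is immediate since a single overshoot $Z_{v_0}>v_0-k$ kills $k\in T_{v_0}^Z$. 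Because $E_k$ is an independent product over $v>k$, $\prob{E_k}=\prod_{j\ge1}\prob{Z\le j}=\prod_{j\ge1}(1-p_{j+1})$, and this is strictly positive precisely because $\sum_{j\ge1}p_{j+1}=\E Z-1<\infty$ and $q_1>0$ makes the $j=1$ factor positive — so the two hypotheses are used in exactly the right places. Upgrading $\prob{E_k}=c>0$ to almost-sure infinite occurrence via Kolmogorov's zero-one law (after noting $E_k\in\sigma(Z_{k+1},Z_{k+2},\dots)$, so $\{E_k\ \text{i.o.}\}$ is a tail event) is clean and avoids any second-moment computation, and taking $N$ to be the least dominant index $\ge 1$ finishes the proof. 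One trivial overclaim: $c\in(0,1]$, not necessarily $(0,1)$ (if $Z\equiv1$ then $c=1$), but this plays no role. Notably, your argument delivers the stronger conclusion $N\in T_n^Z$ directly, which is precisely the refinement the paper's Remark addresses.
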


\begin{remark}
Note that the actual statement of the theorem in \cite{surf} is slightly weaker, as it only asserts the existence of a random variable $N$, finite almost surely, such that for all $n \geq N$, the vertex $n$ has the same colour as vertex $N$. This does not
necessarily imply that $N \in T_n^Z$. Nevertheless, one can add this
property of $N$ without changing the proof provided in \cite{surf}.
\end{remark}

Define $Z'_n = \min(Z_n, W_n)$ and $Z' = \min (Z,W)$ and note that $S_n = T_n^{Z'}$. By Theorem \ref{th1 surf}, we obtain the following result for moderate and light tails.

\begin{corollary}\label{corollary surf}
    Let $Z$, $W$, $(Z_n)_{n\ge 1}$ and $(W_n)_{n\ge 1}$ be i.i.d.\ random variables.
    If $\E \min (Z,W) < \infty$ and $q_1>0$, then there exists a
    positive random variable $N$ with $\prob{N<\infty}=1$ such that,
    with probability one,
 $$N\in S_n \quad \textit{for all } n\ge N~.$$
\end{corollary}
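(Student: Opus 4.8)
The plan is to deduce this directly from Theorem~\ref{th1 surf}, applied not to $(Z_n)_{n\ge 1}$ but to the ``shortest-edge'' increment sequence. Recall from the paragraph preceding the statement that $S_n = T_n^{Z'}$, where $Z'_n = \min(Z_n,W_n)$ and $Z' = \min(Z,W)$. In other words, $S_n$ is exactly the $Z'$-chain rooted at $n$ in a single-choice subtractive random forest in the sense of \cite{surf}, built from the i.i.d.\ increment sequence $(Z'_n)_{n\ge 1}$. So the whole task reduces to checking that this sequence satisfies the two hypotheses of Theorem~\ref{th1 surf}.

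First, since $(Z_n,W_n)_{n\ge 1}$ are i.i.d.\ pairs, the variables $Z'_n=\min(Z_n,W_n)$ are i.i.d.\ and distributed as $Z'=\min(Z,W)$, with values in $\N$. Second, the finite-mean hypothesis for the increments is $\E Z' < \infty$, which is precisely the assumption $\E \min(Z,W) < \infty$. Finally, Theorem~\ref{th1 surf} requires the increment distribution to put positive mass on $1$; here $\PP(Z'=1)=\PP(\min(Z,W)=1)\ge \PP(Z=1)=q_1>0$, using that $W\ge 1$ almost surely. Thus all hypotheses are met, and Theorem~\ref{th1 surf} (applied to the forest with increments $(Z'_n)_{n\ge 1}$) produces a positive random variable $N$, finite almost surely, with $N\in T_n^{Z'}$ for all $n\ge N$. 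Since $T_n^{Z'}=S_n$, this is the claimed statement.

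There is essentially no obstacle here — the result is a genuine corollary rather than a new argument. The only point deserving a line of care is the verification that the increment distribution of $Z'$ charges $\{1\}$, i.e.\ $\PP(Z'=1)>0$, which follows immediately from $q_1>0$ since $\{Z=1\}\subseteq\{\min(Z,W)=1\}$. Everything else is bookkeeping: translating the two-choice ``shortest-edge'' subgraph into a single-choice forest and quoting Theorem~\ref{th1 surf}.
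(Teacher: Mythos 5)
Your proof is correct and takes exactly the same route as the paper: identify $S_n$ with the $Z'$-chain $T_n^{Z'}$ for $Z'=\min(Z,W)$ and invoke Theorem~\ref{th1 surf}. The paper states this in one line without even spelling out the verification $\PP(Z'=1)\ge q_1>0$, which you correctly note.
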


When $\E Z = \infty$, Theorem 3 in \cite{surf} states that every tree in the forest is finite. This translates as follows.

\begin{theorem}[Broutin, Devroye, Lugosi, Oliveira \cite{surf}]\label{th2 surf}
    Let $Z$ and $(Z_n)_{n\ge 1}$ be i.i.d.\ random variables.
    If $\E Z = \infty$ and $q_i > 0$ for all $i \ge 1$, then 
    $$\PP( \cup_{ i \le 0} \left[ \left|\{n\ge 1 : i\in T_n^Z\}\right|=\infty \right] )=0~.$$
\end{theorem}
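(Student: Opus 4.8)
The plan is to read this statement off from Theorem~3 of \cite{surf}, which asserts that in the one-choice forest every tree is finite almost surely when $\E Z = \infty$ and $q_i > 0$ for all $i \ge 1$; the only thing to verify is that the event in the displayed statement coincides with the event ``some tree is infinite''. To that end, first I would note that for each positive $n$ the set $T_n^Z$ is exactly the chain obtained by repeatedly following the $Z$-edge: $n = a_0$, $a_1 = a_0 - Z_{a_0}$, $a_2 = a_1 - Z_{a_1}$, and so on. Since every $Z_k \ge 1$, this chain is strictly decreasing and therefore hits a non-positive integer after finitely many steps, say $a_k =: \rho(n) \le 0$, and stops there because $T_m^Z = \{m\}$ for $m \le 0$. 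Hence the unique non-positive element of $T_n^Z$ is $\rho(n)$, which is the root of the tree containing $n$ in the one-choice forest. Consequently, for $i \le 0$ we have $i \in T_n^Z$ if and only if $\rho(n) = i$, so that $\{n \ge 1 : i \in T_n^Z\}$ is precisely the set of positive vertices in the tree rooted at $i$, and this set is infinite if and only if that tree is infinite.

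With this identification in hand, I would simply invoke Theorem~3 of \cite{surf}: under the hypotheses $\E Z = \infty$ and $q_i > 0$ for all $i$, almost surely every tree of the one-choice forest is finite, i.e.\ almost surely, for every $i \le 0$, the set $\{n \ge 1 : i \in T_n^Z\}$ is finite. This is exactly the assertion $\PP\bigl(\bigcup_{i \le 0}[\,|\{n \ge 1 : i \in T_n^Z\}| = \infty\,]\bigr) = 0$. Alternatively, since there are only countably many roots $i \le 0$, a union bound reduces the claim to showing $\PP(|\{n \ge 1 : i \in T_n^Z\}| = \infty) = 0$ for each fixed $i$, which is once more a consequence of Theorem~3 of \cite{surf}.

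The entire difficulty is thus pushed into Theorem~3 of \cite{surf}, and it is worth pointing out why that input is not cheap. The expected number of positive vertices in the tree rooted at $0$ equals $\sum_{n \ge 1} r_n$, where $r_n$ is the renewal sequence satisfying \eqref{r recursion}; this series diverges, being the expected total number of epochs of a proper renewal process. Hence no first-moment argument can deliver finiteness, and one must instead use the finer recursive/truncation analysis of \cite{surf}, which exploits the fact that $r_n \to 0$ when $\E Z = \infty$ to show that the tree is almost surely finite even though its mean size is infinite.
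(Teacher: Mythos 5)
Your proposal matches the paper's own treatment: the paper does not prove this theorem but simply cites Theorem~3 of \cite{surf} and notes that ``every tree in the forest is finite'' translates into the displayed statement, which is exactly the reduction you carry out. Your added care in checking that $\{n\ge 1: i\in T_n^Z\}$ is precisely the set of positive vertices of the tree rooted at $i$, and your closing remark on why the input from \cite{surf} is not a first-moment triviality, are correct elaborations but do not change the route.
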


\section{Rightmost and leftmost leaves}
\label{sec:rlleaves}
  
As stated in Theorem \ref{M diverges a.s. }, the leftmost leaf $\leftmost_n$ reached by vertex $n$ diverges to $-\infty$ whenever $\E Z = \infty$. This can be seen as a consequence of the behavior of the one-choice model, as one only needs to check the divergence to negative infinity of the leaf $\min T_n^Z$ reached by vertex $n$ by following only the $Z$-edges.

\begin{proof}[Proof of Theorem \ref{M diverges a.s. }] From Theorem
  \ref{th2 surf} we know that with probability one, for all $k\le 0$,
  there are at most a finite number of integers $n\ge 1$ such that
  $k\in T_n^Z$. Thus, the sequence $(\min T_n^Z)_{n\in\N}$, taking
  values in $\{0,-1,-2,\dots\}$, cannot take the same value an
  infinite number of times, so it goes to $-\infty$ as $n\to \infty$
  with probability one. This concludes the proof, since $\leftmost_n\le \min T_n^Z$ for all $n\ge 1$.
    
\end{proof}

\begin{proof}[Proof of Theorem \ref{a.s thm Emin<infty}]
    Assume that $\E \min(Z,W) < \infty$. From Corollary \ref{corollary surf}, we know that there exists some random variable $N$ with $\prob{N<\infty}=1$ such that, with probability one, $N\in S_n$ for all $n\ge N$.
    In particular, for any $n\ge N$ we have $T_N\subset T_n$, which implies that $|\rightmost_N|\ge |\rightmost_n|$. Thus, with probability one, we have $$\sup_{n\in \N}|\rightmost_n| \le \max_{1\le k \le N} |\rightmost_k| < \infty~.$$
When $\E \min(Z,W) = \infty$, we have  
$$\sum_{n\ge 1} \prob{\min (Z_n,W_n)\ge 2n}=\sum_{n\ge 1}p_{2n}^2 =\infty~.$$
    Hence, by the second Borel-Cantelli lemma, we know that with probability one we have $\min(Z_n,W_n)\ge 2n$ infinitely often, which implies that $\rightmost_n\le -n$ infinitely often and proves the second statement of the theorem.
    
\end{proof}

\section{Number of leaves}
\label{sec:numberofleaves}

\subsection{Moderate tails}
\label{sec:moderate}

Consider the number $\leaves_n$ of leaves reached by vertex $n$, when the distribution of $Z$ is such that $\E Z = \infty$ and $\E \min(Z,W) < \infty$. 
We first show divergence in probability,
implying weak consistency in configuration (iii).

% \begin{proposition}\label{weak behavior case 2}
%     Assume that the distribution of $Z$ is not supported by any proper additive subgroup of the integers. If $\E Z = \infty$ and $\E \min (Z,W)<\infty$, 
%     then $$\leaves_n\to \infty~ \textit{ in probability as } n \to \infty.$$
% \end{proposition}

\color{black}
\begin{proposition}\label{weak behavior case 2}
    Assume that the distribution of $Z$ is aperiodic. If $\E Z = \infty$ and $\E \min(Z,W) < \infty$, then
    $$
    \leaves_n \to \infty \quad \text{in probability as } n \to \infty.
    $$
\end{proposition}

The proof uses the balance that arises in the moderate-tail regime. The distribution of delays is sufficiently heavy to ensure that each non-leaf vertex in $T_n$ has a non-negligible chance of reaching a leaf, but light enough to guarantee that $T_n$ contains a substantial number of non-leaf vertices. Consequently, the overall number of opportunities to generate new leaves grows significantly with $n$. This balance is illustrated in Figure~\ref{fig: Tn visualization}: only in the moderate tail regime ($\alpha = 0.6$ in the figure) do we simultaneously observe a sizable number of blue vertices (non-leaves) and a large population of red vertices (leaves).  

To formalize this heuristic, we focus on the chain $S_n$, obtained by repeatedly following the shortest outgoing edge starting from vertex $n$. In the moderate-tailed regime, this chain typically contains many vertices, and each of these vertices has an additional opportunity to connect to a leaf through its longest outgoing edge. We therefore define
\[
J_n = \sum_{m=1}^n \IND_{m \in S_n}\,\IND_{\max(Z_m, W_m) \ge m},
\]
so that $J_n$ counts the number of vertices in $S_n$ that are connected to a leaf via their longest outgoing edge. Controlling the growth of $J_n$ makes it possible to turn the heuristic balance into a rigorous statement.

\color{black}

\begin{lemma}\label{Divergence}
The expected number of vertices in the chain $S_n$ that connect to a leaf through their longest outgoing edge grows to infinity with $n$:
\[
\E[J_n] \;\to\; \infty \quad \text{as } n \to \infty~.
\]
\end{lemma}

\begin{proof}  
Note that for $n\ge m\ge 1$, the events $\{m\in S_n\}$ and $\{ \max(Z_m,W_m) \ge m\}$ are independent. Define $v_m := \PP(0\in S_m)$ for all $m\ge 1$ and let $v_0=1$, so that $$\E J_n = \sum_{m=1}^n\PP(m\in S_n) \PP(\max(Z_m,W_m) \ge m) = \sum_{m=1}^n v_{n-m}\PP(\max(Z_m,W_m) \ge m)~. $$

\color{black}
\noindent The sequence $(v_n)_{n\ge 0}$ can be better understood by considering the arithmetic renewal process $M_n$ with holding times given by the i.i.d.\ random variables $(\min(Z_i,W_i))_{i\ge 1}$, and initialized with $M_0 = 0$. In other words,  
\[
M_n = \sum_{i=1}^n \min(Z_i,W_i)~.
\]
By definition, $v_n$ is the probability that $n$ can be expressed as a sum of independent random variables $(\min(Z_i,W_i))_{i\ge 0}$. In terms of the renewal process, this means that
\[
v_n = \PP(\exists k \ge 0 : M_k = n)~.
\]
Hence, the sequence $(v_n)_{n \ge 0}$ corresponds to the renewal measure of the aperiodic renewal process $(M_n)_{n \ge 0}$. By the Erd\H{o}s–Feller–Pollard renewal theorem \cite{erdos1949theorem}, it follows that
\color{black}

% \noindent Let $f(z)=\sum_{n\ge 0 } v_n z^n$, $\tilde{q}_m = \PP(\min(Z,W)=m)$  and $g(z)=\sum_{n\ge 1} \tilde{q}_nz^n$.
% In view of $$v_n=\sum_{m=0}^{n-1} \tilde{q}_{n-m}v_m ,$$
% we have $$f(z)= 1 + \sum_{n=1}^\infty z^n \sum_{m=0}^{n} \tilde{q}_{n-m}v_m = 1+ g(z)f(z) = \frac{1}{1-g(z)}.$$

% \noindent Since $\min(Z, W)$ is not supported by any proper additive subgroup of the integers, the Erd\H{o}s-Feller-Pollard theorem
%\cite{erdos1949theorem} implies that

\begin{equation}
\lim_{n \to \infty} v_n =   \frac{1}{\E \min (Z,W)} >0~. \label{convergence vn}
\end{equation}
Thus, for some constant $c>0$, we have 
\begin{equation}
\E J_n \ge c \sum_{m=1}^n p_m \to \infty \quad \text{as } n\to \infty~. \label{lowerbound}
\end{equation}
\end{proof}

\noindent
We use the second-moment method to show that $J_n \to \infty$ in probability.

\begin{lemma}\label{Lemma second moment}
Assume that $\E Z = \infty$ and $\E \min (Z,W) < \infty$, and let $\V J_n$ denote the variance of the random variable $J_n$. Then
\begin{equation}
\frac{\V J_n}{(\E J_n)^2}\to 0 \quad \text{as } n \to \infty~. \label{scnd moment method}
\end{equation}
In particular, we have:
$$\PP\left(J_n\le \frac{\E J_n}{2}\right)=\PP\left(J_n - \E J_n \le - \frac{\E J_n}{2}\right)\le 4\frac{\V J_n}{(\E J_n)^2}\to 0~,$$
and therefore $J_n \to \infty$ in probability as $n \to \infty$.
\end{lemma}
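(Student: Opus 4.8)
The plan is to obtain \eqref{scnd moment method} by a second‑moment computation that exploits the fact that $S_n$ is a \emph{chain}. Write $X_m=\IND_{m\in S_n}\IND_{\max(Z_m,W_m)\ge m}$, so $J_n=\sum_{m=1}^n X_m$, and record four facts. Since $\E Z=\infty$, $A_n:=\sum_{m=1}^n p_m\to\infty$. Since $\PP(\min(Z,W)\ge m)=p_m^2$ and $\E\min(Z,W)<\infty$, $\sum_{m\ge1}p_m^2<\infty$. One has $p_m\le\PP(\max(Z,W)\ge m)=2p_m-p_m^2\le2p_m$. Finally, \eqref{convergence vn} together with dominated convergence (using $\sum_w q_w=1$ and $v_j\le1$) gives $v_j\to c_0:=1/\E\min(Z,W)$ and $\phi(\ell):=\sum_{w=1}^{\ell}q_w v_{\ell-w}\to c_0$ as $j,\ell\to\infty$; moreover, combining $v_j\le1$ with \eqref{lowerbound}, there is $c>0$ with $cA_n\le\E J_n\le2A_n$ for all large $n$. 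Split $\V J_n=\sum_m\var(X_m)+2\sum_{1\le m<m'\le n}\cov(X_m,X_{m'})$; since each $X_m\in\{0,1\}$, the first sum is at most $\E J_n=O(A_n)$.

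The heart of the argument is the evaluation of $\E[X_mX_{m'}]$ for $m<m'$. Because $S_n$ is a chain, $\{m\in S_n\}\cap\{m'\in S_n\}=\{m'\in S_n\}\cap\{m\in S_{m'}\}$. Tracing the chain downward from $n$ and stopping upon reaching $m'$ shows that $\{m'\in S_n\}$ is determined by the jump variables $(Z_k,W_k)$ with $k>m'$; tracing downward from $m'$ to $m$ does consult $(Z_{m'},W_{m'})$ but never $(Z_m,W_m)$; and $\{\max(Z_m,W_m)\ge m\}$ depends only on $(Z_m,W_m)$. Hence the factor $\IND_{\max(Z_m,W_m)\ge m}$ splits off, then $\IND_{m'\in S_n}$ splits off, leaving
\[
\E[X_mX_{m'}]=\PP(\max(Z,W)\ge m)\cdot v_{n-m'}\cdot\E\bigl[\IND_{m\in S_{m'}}\IND_{\max(Z_{m'},W_{m'})\ge m'}\bigr].
\]
Conditioning the last expectation on $(Z_{m'},W_{m'})$ and using $\IND_{m\in S_{m'}}=\IND_{m\in S_{m'-\min(Z_{m'},W_{m'})}}$ (valid since $m<m'$), the joint event forces one of $Z_{m'},W_{m'}$ to lie in $\{1,\dots,m'-m\}$ and the other to be $\ge m'$; a short computation then gives $\E[\IND_{m\in S_{m'}}\IND_{\max(Z_{m'},W_{m'})\ge m'}]=2p_{m'}\,\phi(m'-m)$. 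Since $\E X_m=v_{n-m}\PP(\max(Z,W)\ge m)$ (as in the proof of Lemma~\ref{Divergence}), it follows that
\[
|\cov(X_m,X_{m'})|\le 4\,p_mp_{m'}\,\bigl|\phi(m'-m)-v_{n-m}\bigr|+2\,p_mp_{m'}^2 .
\]

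To finish, sum these bounds over $m<m'$. First, $\sum_{m<m'}p_mp_{m'}^2\le A_n\sum_{m\ge1}p_m^2=O(A_n)$. For the main term, fix $\eps>0$ and pick $L$ so large that $|\phi(\ell)-c_0|<\eps$ and $|v_j-c_0|<\eps$ for all $\ell,j\ge L$. If $m'-m\ge L$ and $n-m\ge L$, then $|\phi(m'-m)-v_{n-m}|<2\eps$, so these pairs contribute at most $2\eps A_n^2$; if $m'-m<L$, then using $|\phi(m'-m)-v_{n-m}|\le2$ and $p_mp_{m+\ell}\le p_m^2$, these pairs contribute at most $2(L-1)\sum_{m\ge1}p_m^2=O_\eps(1)$; and the remaining case ($m'-m\ge L$ with $n-m<L$) cannot occur, since $m'-m\ge L$ forces $m\le n-L$. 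Combining, $\V J_n\le 16\,\eps\,A_n^2+O_\eps(A_n)$, so $\V J_n/(\E J_n)^2\le 16\eps/c^2+o(1)$; letting $\eps\to0$ gives \eqref{scnd moment method}. The displayed ``in particular'' statement — hence $J_n\to\infty$ in probability — then follows from Chebyshev's inequality together with $\E J_n\to\infty$, exactly as written in the statement.

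The main obstacle is the covariance identity: one must pin down precisely which jump variables each of $\{m\in S_n\}$, $\{m'\in S_n\}$, $\{m\in S_{m'}\}$ and the two ``$\max$'' events depends on — in particular the asymmetry that \emph{arriving} at a vertex does not consult its own jumps whereas \emph{leaving} it does — and then extract the clean closed form $2p_{m'}\phi(m'-m)$. Once that is in hand, the double sum is routine and uses only the regime hypotheses: $\E Z=\infty$ supplies $A_n\to\infty$, while $\E\min(Z,W)<\infty$ supplies both $\sum p_m^2<\infty$ and the limit $c_0$.
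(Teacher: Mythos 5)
Your proof is correct and follows essentially the same route as the paper: the decisive step in both is the exact factorization of $\E[X_kX_m]$ for $k<m$ into $p_k^*\cdot v_{n-m}\cdot 2p_m\sum_{i=1}^{m-k}q_iv_{m-k-i}$ using the chain structure of $S_n$ and the asymmetry that reaching a vertex does not consult its own jump, followed by the renewal-theoretic limit $v_j\to1/\E\min(Z,W)$ together with $\sum p_m^2<\infty$ and $\sum_{m\le n}p_m\to\infty$. The only difference is bookkeeping: you bound $\V J_n$ covariance by covariance, whereas the paper shows $\E J_n^2/(\E J_n)^2\to1$ by sandwiching both sides between $(\lambda\pm\epsilon)^2\sum p_k^*p_m^*$ — equivalent presentations of the same estimate.
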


\begin{proof}[Proof of Lemma \ref{Lemma second moment}]
To show \eqref{scnd moment method}, we prove that $\E[J_n^2]/(\E J_n)^2\to 1$ as $n \to \infty$. We first define
$$
p_n^*=\PP(\max(Z,W)\ge n) = 2p_n - p_n^2~.
$$
Recall that $v_n = \PP(0 \in S_n)$, as introduced earlier. Then, for $n\ge 1$,
\begin{align}
    (\E J_n)^2 &= 2\sum_{1\le k<m\le n} p_m^* p_k^* v_{n-m}v_{n-k} + \sum_{1\le m \le n} (p_m^*)^2 v_{n-m}^2 \nonumber \\
    &= 2\sum_{1\le k<m\le n} p_m^* p_k^* v_{n-m}v_{n-k} + O(1)
\end{align}
since $\sum_{1\le m \le n} (p_m^*)^2 v_{n-m}^2$ is bounded by $4\E
\min (Z,W) <\infty$.

Now, define for any $m\ge 1$ the random variable $X_m=\max(Z_m, W_m)$ and observe that for any $1\le k < m \le n$, the events $\left\{ X_k\ge k\right\}$ and $\left\{ X_m\ge m, m\in S_n, k \in S_n\right\}$ are independent.
Thus, for any fixed $n\ge 1$, we may write
\begin{align}
    \E\left[ J_n^2\right] &= 2\sum_{1\le k < m \le n} \prob{m\in S_n, k\in S_n, X_m\ge m, X_k\ge k} + \E J_n \nonumber \\
    &= 2\sum_{1\le k < m \le n} p_k^*\prob{m\in S_n, k\in S_n, X_m\ge m} + \E J_n \nonumber \\
    &= 2\sum_{1\le k < m \le n} p_k^* \procond{k\in S_n, X_m \ge m}{m\in S_n}\prob{m\in S_n}+ \E J_n \nonumber 
\end{align}
If $m\in S_n$, the event $\{k\in S_n, X_m\ge m\}$ can only happen if one of the two random variables $Z_m$ and $W_m$ is greater or equal to $m$ and the other one is smaller than $m-k$, for otherwise $k$ could not belong to $S_n$ when $1\le k<m \le n$. Thus,
using the independence of $Z_m$ with respect to $W_m$, $\{k\in S_n\}$ and $\{m\in S_n\}$, and using the fact that 
$p_m^* + p_m^2 = 2p_m$,
\begin{align}
     \E&\left[ J_n^2\right] \\
     &= 2\sum_{1\le k < m \le n} p_k^* \times 2 \procond{Z_m\ge m, W_m\le m-k , k\in S_n}{m\in S_n}\prob{m\in S_n}+ \E J_n \nonumber \\
     &= 2\sum_{1\le k < m \le n} p_k^* (p_m^*+p_m^2)v_{n-m}\procond{W_m\le m-k , k\in S_n}{m\in S_n}+ \E J_n \nonumber \\
    &= 4\sum_{1\le k < m \le n} p_k^* (p_m^*+p_m^2)v_{n-m}\sum_{i=1}^{k-m}\prob{k\in S_{m-i}}\prob{W_m=i}+ \E J_n \nonumber \\
    &= 2\sum_{1\le k < m \le n} p_k^* (p_m^*+p_m^2)v_{n-m}\sum_{i=1}^{m-k}v_{m-k - i}q_i+ \E J_n ~.
\end{align}
Observe that
\begin{align}
    \sum_{1 \le k < m \le n}p_k^*p_m^2v_{n-m}\sum_{i=1}^{m-k}q_iv_{m-k-i} &\le \sum_{1 \le k < m \le n}p_k^*p_m^2v_{n-m} \nonumber \\
    &\le  \sum_{1 \le k < m \le n}p_k^*p_m^2 \nonumber \\
    &\le \left( \sum_{m=1}^\infty p_m ^2\right) \left(\sum_{k=1}^n p_k^*\right) \nonumber \\
    &=  O\left( \E J_n\right) \nonumber
\end{align}     
by using the inequality in \eqref{lowerbound}, which leads to 
\begin{equation}
    \E[J_n^2] = 2\sum_{1\le k < m \le n} p_k^*p_m^*v_{n-m}\sum_{i=1}^{m-k}v_{m-k-i}q_i+ O\left( \E J_n \right)~.
\end{equation}
Fix some $\epsilon >0$. By \eqref{convergence vn}, we know that there exists some constant $x>0$ such that for all $m \ge x$, we have $|v_m-\lambda|<\epsilon$,  where $\lambda = 1/(\E\min(Z,W))$.

\noindent If $m\ge x$, see that
\begin{align*}
    \sum_{i=1}^m q_i v_{m-i} &\le \sum_{i=1}^{m-x} q_i (\lambda + \epsilon) + \sum_{m-x<i\le m} q_i \\
    &\le (\lambda + \epsilon) + p_{m-x}~.
\end{align*} 
Thus, there exists some constant $y>x$ such that for all $m \ge y$,
$$ \sum_{i=1}^m q_i v_{m-i}\le \lambda + 2 \epsilon~.$$
Moreover,
\begin{align*}
    \sum_{\substack{1\le k < m \le n \\ \textit{s.t. } m-k < y} }p_m^*v_{n-m}\underbrace{p_k^*\sum_{i=1}^{m-k}v_{m-k-i}q_i}_{\le 1}  
    &\le \sum_{1\le m \le n}p_m^* v_{n-m} \sum_{k=m-y}^m 1 \\
    &= (y+1) \E J_n = O\left( \E J_n\right)~,
\end{align*}
and 
\begin{align*}
    \sum_{\substack{1\le k < m \le n \\ \textit{s.t. } n-m < y} }p_k^*\underbrace{p_m^*v_{n-m}\sum_{i=1}^{m-k}v_{m-k-i}q_i}_{\le 1} &\le \sum_{n-y\le m \le n}1 \sum_{k=1}^n p_k^* \\
    &\le (y+1) \sum_{k=1}^n p_k^*
    = O\left( \E J_n\right)~.
\end{align*}
Putting everything together, we have that 
\begin{align}
    \E [J_n^2] = \sum_{m=1}^{n-y}\sum_{k=1}^{m-y} p_m^*p_k^*v_{n-k}\sum_{i=1}^{m-k} q_i v_{m-k-i} + O\left( \E J_n\right)~. \label{equivalent 1}
\end{align}
Similarly, we have
\begin{align}
    (\E J_n)^2 = \sum_{s=1}^{n-y}\sum_{k=1}^{m-y} p_m^*p_k^*v_{n-m}v_{n-k} + O\left( \E J_n\right)~. \label{equivalent 2}
\end{align}

\noindent Finally, observe that for any $n> y$ we have:
\begin{align}
\sum_{m=1}^{n-y}\sum_{k=1}^{m-y} p_m^*p_k^*\underbrace{v_{n-m}\sum_{i=1}^{m-k} q_i v_{m-k-i}}_{\le (\lambda + 2\epsilon)^2} \le (\lambda +2\epsilon)^2\sum_{m=1}^{n-y}\sum_{k=1}^{m-y} p_m^*p_k^* \label{upper J squared}
\end{align}
and
\begin{align}
    \sum_{m=1}^{n-y}\sum_{k=1}^{m-y} p_m^*p_k^*\underbrace{v_{n-m}v_{n-k}}_{\ge (\lambda - \epsilon)^2} \ge (\lambda - \epsilon)^2\sum_{m=1}^{n-y}\sum_{k=1}^{m-y} p_m^*p_k^*~. \label{lower J squared}
\end{align}
From \eqref{equivalent 1}, \eqref{equivalent 2}, \eqref{upper J squared} and \eqref{lower J squared} it follows that
$$\limsup_{n\to \infty} \frac{\E[J_n^2]}{(\E J_n)^2}\le \left( \frac{\lambda + 2\epsilon}{\lambda - \epsilon}\right)^2 ~.$$
Since this is true for any $\epsilon>0$ small enough, and $\E[J_n^2]\ge (\E J_n)^2$, we conclude that $$\frac{\E [J_n^2]}{(\E J_n)^2}\to 1$$
as $n \to \infty$.\end{proof}

\color{black}
We now turn to the proof of Proposition~\ref{weak behavior case 2}, which asserts that the number of leaves in the medium-tail scenario diverges in probability. We have already shown that $J_n \to \infty$ in probability, where $J_n$ counts the number of edges connecting vertices in the chain $S_n$ to leaves via their longest outgoing edge. Proposition~\ref{weak behavior case 2} follows from this result, but some care is needed: $J_n$ counts edges, not distinct leaves, and multiple edges may connect to the same leaf. The key idea of the proof is to show that the number of edges of the type $\max(Z_n,W_n)$ connecting to any single leaf cannot grow to infinity. This ensures that the divergence of $J_n$ indeed implies that the number of distinct leaves connected to $T_n$ also diverges to infinity in probability.
\color{black}

\begin{proof}[Proof of Proposition \ref{weak behavior case 2}]
 Let us define $$\Lset_n^* =\left\{ k\le 0 : \textit{there exists an }  m\in S_n \textit{ such that } \max (Z_m,W_m)= m-k\right\}~.$$ In other words, $\Lset_n^*$ is the subset of vertices in $\Lset_n$ that are connected to some vertex in $S_n$ with positive index through the longest edge (i.e., given by $\max (Z,W)$).

Given $n\ge 1$ and $k\in \Z$, we introduce the 
random variable $$D_{n,k}=\sum_{m=1}^n \IND_{\max(Z_m,W_m)=m-k}.$$
Note that for any fixed $x>0$,  
\begin{align}
\PP&\left(\left|\Lset_n^*\right|<x\right) \\
&\le \PP\left( \left[\max_{k\in \Lset_n^*} \sum_{m\in S_n}\IND_{\max(Z_m,W_m)=m-k}< \frac{\E J_n}{2x} \right] \cap \left[\left|\Lset_n^*\right|<x  \right]\right) \nonumber\\
&\qquad +\PP \left(\cup_{k\le 0} \left[ D_{n,k} \ge \frac{\E J_n}{2x} \right] \right)~. \label{split sum}
\end{align}

Since $J_n$ can be written as $$J_n=\sum_{k\in\Lset_n^*}\sum_{m\in S_n}\IND_{\max(Z_m,W_m)=m-k}~, $$
we have that 
\begin{align}
&\PP\left( \left[\max_{k\in \Lset_n^*} \sum_{m\in S_n}\IND_{\max(Z_m,W_m)=m-k}< \frac{\E J_n}{2x} \right] \cap \left[\left|\Lset_n^*\right|<x  \right]\right) \nonumber\\
&\quad\le \PP\left(J_n\le \frac{\E J_n}{2}\right) \nonumber \\
&\quad\le 4\frac{\V J_n}{(\E J_n)^2}\to 0 ~. \label{term 1}
\end{align}
Moreover, for any $k\le 0$ and $y>0$ , by Chernoff's bound \cite{chernoff}, \cite{boucheron}, we have that
\begin{align*}
\PP(D_{n,k} > y) &= \PP\left(\sum_{m=1}^n \IND_{\max(Z_m,W_m)=m-k}> y\right) \\
&\le \exp\left( y-p_{1-k}^* - y\log\left(\frac{y}{p_{1-k}^*} \right) \right) \\
&\le \left( \frac{e}{y}\right)^y (p_{1-k}^*)^y ~.
\end{align*}
Let $y_n=\E J_n/(2x)$ and assume that $n$ is large enough so we have $y_n \ge 2$ and $$\sum_{m=1}^\infty (p_{m}^*)^{y_n} \le \sum_{m=1}^\infty (p_{m}^*)^2<\infty \text{ (since } \E\min(Z,W)<\infty \text{ )}~.$$
Thus, by the union bound, 
\begin{align}
    \PP \left(\cup_{k\le 0} \left[ D_{n,k} \ge \frac{\E J_n}{2x} \right]\right) 
    &\le \sum_{k\le 0} \left( \frac{e}{y_n}\right)^{y_n} (p_{1-k}^*)^{y_n} \nonumber \\
    &\le \left( \frac{e}{y_n}\right)^{y_n}  \sum_{k=1}^\infty (p_{k}^*)^2 \to 0 ~. \label{term 2}
\end{align}
By putting \eqref{split sum}, \eqref{term 1}, and \eqref{term 2} together we have that $\PP\left(\left|\Lset_n^*\right|<x\right)\to 0$ as $n\to \infty$ for every fixed $x>0$, which proves Proposition \ref{weak behavior case 2}. 
\end{proof}

\color{black}
Having established divergence in probability, we are now ready to prove almost sure divergence.
\color{black}

\begin{proof}[Proof of Theorem \ref{ L to infinity almost surely}]
    For any $n\ge 1$, define $Y_n=\min\{ m-\min(Z_m,W_m) : m\ge n\}$. A simple union bound gives, for arbitrary $k$:
    $$\prob{Y_n\le k}\le \sum_{m\ge n} \prob{\min(Z_m,W_m)>m-k}.$$
    Since $\E \min (Z,W) < \infty$, the sum above is finite and goes to $0$ as $n$ goes to infinity. Hence, $Y_n\to \infty$ in probability.

    Let us fix $n\ge 0$ and apply Corollary \ref{corollary surf} to
    the model obtained by shifting by $n$ the set of integers so that
    leaves are indexed by the set $\{k: k\le n\}$ and the edges are
    given by the sequences of random variables $(Z_k)_{k\ge n+1}$ and
    $(W_k)_{k\ge n+1}$. This gives us a random variable $N_n> n$,
    finite almost surely, such that
    $N_n\in S_k$ for all $k \ge N_n$. Let $I_n \le n\}$
    be the root of this tree, and note that
    $$Y_n\le I_n \le n <  N_n~.$$

    \begin{figure}[H]
    \centering
    \includegraphics[scale=.5]{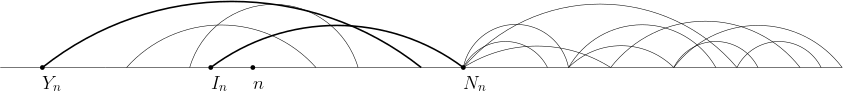}
    \caption{Illustration of the proof of Theorem \ref{ L to infinity almost surely}}
\end{figure}

    Since $Y_n \to \infty$ in probability,  $I_n \to \infty$ in probability as well. Moreover, for all $n\ge 1$, one can choose $N_n$ such that $N_n-n$ is distributed as $N_0$.
    The fact that all vertices $k\ge N_n$ belong to the tree rooted at $I_n$ in this single-choice model implies that $I_n\in T_k$ for all $k\ge N_n$, and in particular $\leaves_{I_n}\le \leaves_k$ for all $k\ge N_n$.
    From Proposition \ref{weak behavior case 2} we know that $\leaves_n\to \infty$ in probability. Hence, $\leaves_{I_n}\to \infty$ in probability.
    Moreover, for any fixed $m\ge 1$ we have
    \begin{align*}
        \prob{\cup_{k \ge 2n} \left[ \leaves_k\le m \right] } 
        &\le \prob{N_n > 2n} + \prob{N_n \le 2n, \leaves_{I_n}\le m} \\
        &\le \prob{N_0 > n} + \prob{\leaves_{I_n}\le m}~.
    \end{align*}
    Thus, for any $m\ge 1$,  $\prob{\cup_{ k \ge 2n} [\leaves_k\le m] }\to 0$ as $n\to \infty$. By continuity of measure, this implies that for any $m \ge 1$,
    $$\prob{\cap_{n\ge 1} \cup_{k \ge 2n} \left[ \leaves_k \le m\right]}=0 .$$
    By the union bound, $$\prob{\cup_{m \ge 1} \cap_{n\ge 1} \cup_{k \ge 2n } \left[\leaves_k\le m\right] }=0~,$$
    which implies that $\leaves_n\to \infty$ almost surely.
\end{proof}

% \begin{theorem}\label{weak thm Emin=infty}
%     Assume that $\E \min(Z,W) = \infty $. If $\sum_{n\ge0}r_n^2 < \infty~,$ then $$\leaves_n\to_{n\to\infty} \infty \textit{ in probability,}$$
%     which also implies that $$\leftmost_n\to_{n\to \infty}\infty \textit{ in probability.}$$
% \end{theorem}
\subsection{Heavy tails}
\label{sec:heavy}

In this section, we study the heavy-tailed case, that is, $\E \min(Z,W) = \infty$. From Theorem \ref{a.s. thm Emin=infty} we already know that $\leaves_n$ cannot go to infinity with probability one. 
Here we show that, in contrast to this, $\leaves_n\to \infty$ in probability when $\sum_{n=1}^\infty r_n^2 <\infty$.

\begin{lemma} \label{r-bound}
    For any integers $k \in \Z$ and $n\ge \max(1,k)$, $$\prob{k\in T_n^Z}\le r_{n-k}~.$$
\end{lemma}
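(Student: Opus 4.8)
The statement is naturally proved by strong induction on $n$, established simultaneously for all admissible $k$. The two ingredients are the recursive description $T_n^Z = \{n\}\cup T_{n-Z_n}^Z$ together with the fact that a $Z$-chain is \emph{strictly decreasing} (so once it drops below $k$ it can never return to $k$), and the defining recursion \eqref{r recursion} for the sequence $(r_m)_{m\ge 0}$. It is convenient to phrase the induction so that the base cases include all $n\le 0$, where $T_n^Z=\{n\}$ is deterministic and the claim reads $\one_{k=n}\le r_{n-k}$, which holds because $r_0=1$ and $r_m\ge 0$.

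\textbf{Key steps.} First, the base case $n=1$: here $1-Z_1\le 0$, so $T_1^Z=\{1,1-Z_1\}$, and for $k=1$ the probability equals $1=r_0$, while for $k\le 0$ it equals $q_{1-k}\le r_{1-k}$ (the last inequality from \eqref{r recursion}, keeping only the term $i=1-k$). Second, the inductive step for $n\ge 1$: if $k=n$ then $n\in T_n^Z$ surely and the bound is $1=r_0$. If $k<n$, then $k\in T_n^Z$ if and only if $k\in T_{n-Z_n}^Z$, and since the chain issued from $n-i$ depends only on $(Z_m)_{m\le n-i}$, it is independent of $Z_n$; hence
\[
\prob{k\in T_n^Z}=\sum_{i\ge 1} q_i\,\prob{k\in T_{n-i}^Z}.
\]
Now observe that whenever $i>n-k$ we have $n-i<k$, so every element of $T_{n-i}^Z$ is $\le n-i<k$ and the corresponding probability vanishes; for $1\le i\le n-k$ we have $n-i<n$ and $n-i\ge k$, so the induction hypothesis gives $\prob{k\in T_{n-i}^Z}\le r_{n-i-k}$. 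Therefore
\[
\prob{k\in T_n^Z}\le \sum_{i=1}^{n-k} q_i\, r_{(n-k)-i}=r_{n-k},
\]
the last equality being \eqref{r recursion} applied with index $n-k\ge 1$. This closes the induction.

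\textbf{Main obstacle.} There is no serious difficulty here; the proof is essentially bookkeeping. The one point requiring care is the truncation of the series at $i=n-k$: one must invoke monotonicity of the $Z$-chain to argue that the chain from $n$ simply cannot reach $k$ once a single step overshoots it, and one must make sure the argument is uniform over both $k\ge 1$ and $k\le 0$ — which it is, precisely because the induction hypothesis is also (trivially) available for the non-positive ``vertices'' $n-i\le 0$.
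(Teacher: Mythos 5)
Your proof is correct, but it takes a genuinely different route from the paper's. You decompose on the \emph{first} step (conditioning on $Z_n$) and run a strong induction on $n$, handling $k\ge 1$, $k=0$, and $k\le 0$ in a single uniform scheme, with the non-positive vertices supplying trivial base cases. The paper instead decomposes on the \emph{last} positive vertex the $Z$-chain visits: it observes that $\prob{k\in T_n^Z}=r_{n-k}$ with \emph{equality} whenever $k\ge 0$ (by shift invariance of the i.i.d.\ sequence $(Z_j)$), and then for $k<0$ writes $\prob{-k\in T_n^Z}=\sum_{i=1}^n q_{i+k}\,r_{n-i}$ by summing over the last $i\ge 1$ on the chain, re-indexing to compare against the recursion for $r_{n+k}$ and dropping the first $k$ nonnegative terms. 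Your first-step/inductive argument is slightly more mechanical but requires no separate treatment of the sign of $k$; the paper's last-step/renewal argument avoids induction entirely and makes the stronger structural fact (equality for $k\ge 0$, with the inequality arising only from the ``overshoot'' into negative territory) transparent. Both are sound and elementary.
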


\begin{proof}
    Clearly, we have $\prob{k\in T_n^Z}=r_{n-k}$ whenever $k\ge 0$. Moreover, $r_n$ satisfies the recursion $$r_n=\sum_{i=1}^n q_i r_{n-i}~.$$
    Thus, for any $k>0$, we have
    \begin{align*}
        \prob{-k\in T_n^Z} &= \sum_{i=1}^n \prob{Z_i=i+k}\prob{i\in T_n} \\
        &= \sum_{i=1}^n q_{k+i}r_{n-i}\\
        &= \sum_{i=k+1}^{n+k} q_{i}r_{n+k-i} \\
        &\le  \sum_{i=1}^{n+k} q_{i}r_{n+k-i} = r_{n+k}~.
    \end{align*}
\end{proof}
\begin{proof}[Proof of Theorem \ref{weak thm Emin=infty}]
    Assume that $\sum_{n=1}^\infty r_n^2 <\infty$. Fix a constant $w>0$ and let $I_n^w$ denote the number of intervals of the form $(kw,(k+1)w]$, with $k\in \Z_{\ge 0}$, that are intersected by the chain $T_n^Z$. Note that, for any $x>0$, if $I_n^w < x$ then $T_n^Z < wx $, thus 
    \begin{align}
        \prob{I_n^w<x} &\le \prob{T_n^Z < wx} \nonumber \\
        &\le \prob{Z_1+\dots + Z_{\lfloor wx \rfloor}>n}\to 0 ~. \label{I_n goes to infty}
    \end{align}
    Let $\ell \ge 1$ be an arbitrary integer and observe that, if the event $I_n^w \ge 2\ell$ occurs, then one can define a random set of vertices $A_n$ such that
    \begin{enumerate}%[label=(\roman*)]
        \item $A_n$ is independent of $(W_k)_{k\ge 1}~,$
        \item $A_n\subset T_n^Z~,$
        \item $|A_n|=\ell~,$
        \item $\text{for all } k\neq m \in A_n~,~|k-m|\ge w~.$
    \end{enumerate}
    Conditionally on $I_n^w\ge 2\ell$, we have
    \begin{align}
        &\prob{\cup_{k\neq m, k \in A_n}  T_k^W\cap T_m^W\neq \emptyset} \\
        &\quad\le \E \sum_{\substack{k,m\in A_n \\ \textit{s.t. } k\neq m} } \IND_{T_k^W\cap T_m^W \neq \emptyset} \nonumber \\
        &\quad\le \E \sum_{\substack{k,m\in A_n \\ \textit{s.t. } k\neq m} } \prob{T_k^W\cap T_m^W \neq \emptyset} \nonumber \\
        &\quad\le {\ell\choose 2} \max_{\substack{k,m\le n\\ \textit{s.t. } |k-m|\ge w} }\prob{T_k^W\cap T_m^W \neq \emptyset} \label{collision existence bound}
    \end{align}
    by using the first moment method and the independence of $A_n$ with respect to $(W_k)_{k\ge 1}$. Let $k<m$ be a pair of integers such that $|k-m|\ge w$ and observe that $$\prob{T_k^W\cap T_m^W \neq \emptyset}=\prob{T_k^W\cap T_m^Z \neq \emptyset}~.$$
    %\textcolor{black}{(I did this step so I don't have to worry about the dependency of $i \in T_m^W$ and $i \in T_k^W$... but I'm not sure if it's actually necessary)} 
    Thus, the union bound, Lemma \ref{r-bound} and the Cauchy-Schwarz inequality yield
    \begin{align}
        \prob{T_k^W\cap T_m^W \neq \emptyset} &\le \sum_{i=-\infty}^k \prob{i\in T_k^W}\prob{i\in T_m^Z} \nonumber \\
        &\le \sum_{i=-\infty}^k r_{k-i}r_{m-i}  \nonumber \\
        &\le \sqrt{\sum_{i=0}^\infty r_i^2} \times\sqrt{\sum_{i=m-k}^\infty r_i^2}. \label{collision bound}
    \end{align}
    From \eqref{collision existence bound} and \eqref{collision bound}, it follows that 
\begin{equation}
    \prob{\cup_{k\neq m, k \in A_n}  T_k^W\cap T_m^W\neq \emptyset} \le {\ell\choose 2}\sqrt{\sum_{i=0}^\infty r_i^2} \times\sqrt{\sum_{i=w}^\infty r_i^2}.\label{final collision existence bound}
    \end{equation}
    Fix $\epsilon > 0 $. From \eqref{final collision existence bound} we know that we can choose $w$ big enough so we have  for $n\ge 1$,
    \begin{equation}
        \prob{\cup_{k\neq m, k \in A_n}  T_k^W\cap T_m^W\neq \emptyset}\le \epsilon.
    \end{equation}
    Finally, observe that if $\leaves_n < \ell$ and $I_n^w\ge 2\ell$, then there have to be at least two distinct vertices $k,m$ in $A_n$ such that the chains $T_k^W$ and $T_m^W$ meet each other: otherwise, each chain $T_k^W$ with $k\in A_n$ would lead to a distinct leaf in $T_n$, contradicting the event $\leaves_n<\ell$. Thus, by \eqref{final collision existence bound} we have
    $$\prob{\leaves_n<\ell} \le \prob{I_n^w < 2\ell} + \epsilon~.$$
    From \eqref{I_n goes to infty} it follows that for any $\epsilon >0 $ and any integer $\ell\ge 1$
    $$\limsup_{n\to \infty} \prob{\leaves_n<\ell} \le \epsilon~,$$
    which concludes the proof of Theorem \ref{weak thm Emin=infty}.
    \end{proof}

\section{Generalization to $k$ choices.}
\color{black}
A natural generalization of the model allows for $k \ge 2$ choices. Fix $k\ge 2$ and consider $k$ independent sequences of i.i.d.\ random variables $(Z_i^{(1)})_{i \ge 1}, \dots , (Z_i^{(k)})_{i \ge 1}$. Given a sequence of preference values $(U_i)_{i \le 0}$, we define a sequential random coloring by setting $V_n = U_n$ and $C_n = n$ for all $n \le 0$, and for $n > 0$ letting
\[
C_n = C_{n-Z_n^{(j)}} \quad \text{and} \quad V_n = V_{n-Z_n^{(j)}} \quad \text{where} \quad j = \argmin_{1 \le i \le k} V_{n-Z_n^{(i)}}
\]
for $n \ge 1~.$
The set of vertices reached from vertex $n$ is now defined recursively as  
\[
T_n = \{n\} \cup \left(\bigcup_{i=1}^k T_{n-Z_n^{(i)}} \right), \quad n \ge 1~,
\]
with $T_n = \{n\}$ for all $n \le 0$. As in the two-choice model, we may then define $\rightmost_n$, $\leaves_n$, and $\leftmost_n$ accordingly. The graph associated with this model therefore, contains multiple embedded copies of the two-choice process.  It follows from this observation that consistency in the two-choice model carries over to the $k$-choice model for any $k \ge 2$, and for each of the three configurations of the initial pool of topics considered in this paper. As a consequence, Theorem \ref{M diverges a.s. } extends automatically to the $k$-choice model: 
\begin{theorem}
    Assume that $q_i>0$ for all $i\in \N$ and $\E Z=\infty$. Then, in the $k$-choice model, with
    probability one, $$\leftmost_n\to -\infty~.$$
Thus, the system is strongly consistent in configuration (ii).
\end{theorem}

However, the increase in the number of available recommendations allows one to work under weaker assumptions on the tail of $Z$, and in this setting the results can be sharpened. 
In particular, one can adapt the proof given in Section~\ref{sec:moderate} by modifying the recursion defining $S_n$ to
\[
S_n = \{n\} \cup S_{\,n-\min_{1\le i \le k} Z_n^{(i)}},
\]
and by controlling the growth of 
\[
J_n = \sum_{m=1}^n \IND_{m \in S_n}\,\IND_{\max_{1\le i \le k} Z_m^{(i)} \ge m}~,
\]
to establish the following result.  

\begin{theorem}
    Assume that the distribution of $Z$ is aperiodic, and that 
    \[
    \E[Z] = \infty, \qquad \E\!\left[\min_{1\le i \le k} Z^{(i)}\right] < \infty~,
    \]
    where $Z^{(1)}, \dots, Z^{(k)}$ are independent copies of $Z$.  
    Then, in the $k$-choice model,  
    \[
    \leaves_n \to \infty \quad \text{almost surely.}
    \]
\end{theorem}

The arguments used in the proofs of Theorems \ref{M bounded as}, \ref{a.s. thm Emin=infty} and Theorem \ref{a.s thm Emin<infty} extend naturally to the present setting, leading to the following three results.

\begin{theorem}
Let $\leftmost_\infty=\inf_{n\in \N} \leftmost_n$. If $\E Z < \infty$, then $\leftmost_\infty$ is finite almost surely.
Thus, in the $k$-choice model, the sequences $(\rightmost_n)_{n\in\N},(\leftmost_n)_{n\in\N}$ and $(\leaves_n)_{n\in\N}$ are almost surely bounded.
\end{theorem}

\begin{theorem}
    Let $Z^{(1)},\dots, Z^{(k)}$ be independent copies of $Z$. Assume that $$\E \min_{1\le i \le k} Z^{(i)} = \infty~.$$ Then, in the $k$-choice model, with probability one,
  $$
  \liminf_{n\to \infty} \leaves_n < \infty~.
  $$
  \end{theorem}

\begin{theorem}
    Let $Z^{(1)},\dots, Z^{(k)}$ be independent copies of $Z$. 
    \begin{enumerate}%[label=(\roman*)]
        \item If $\E \min_{1\le i \le k} Z^{(i)}<\infty$, then $\sup_{n\in\N} |\rightmost_n| < \infty$  almost surely in the $k$-choice model.
        \item If $\E \min_{1\le i \le k} Z^{(i)} = \infty$, then $\sup_{n\in\N} |\rightmost_n| = \infty$ almost surely in the $k$-choice model.
    \end{enumerate}
\end{theorem}

In summary, the consistency table from Section \ref{sec:summary} remains unchanged in the $k$-choice model. The only difference is that the moderate-tail regime is now characterized by the conditions $\E \min_{1 \le i \le k} Z^{(i)} < \infty$ and $\E Z^{(1)} = \infty$, while the heavy-tail regime corresponds to $\E \min_{1 \le i \le k} Z^{(i)} = \infty$.

\color{black}

\section{Conclusion}
%Our consistency study remains incomplete since we have no results on the tightness of the rightmost leaf in the extremely heavy tail case. Moreover, the initial configuration of preferences $(U_i)_{i\le 0}$ we consider are very basic, allowing us to evaluate consistency just by looking at the cardinality and the extremal points of the set of leaves $\Lset_n$ reached by vertex $n$. 

We introduced and studied a simple mathematical model for
recommendation systems based on giving each user two random choices,
which include a mixture of past choices and untried options.  In this
setup, we identify three regimes based on the tail behavior of the
sizes of the jumps in the past.
\color{black}
Although our model is primarily theoretical, its analysis offers qualitative insights that may inform the design of recommendation systems. In particular, it highlights how structural properties of the recommendation process can influence long-term behavior and diversity. Extending the model to incorporate personalization and adaptive feedback remains an interesting direction for future work.

\subsection{Comparison with standard baselines}

To situate our results within the broader landscape of recommendation methods, it is natural to consider connections to standard algorithmic baselines. Classical approaches, such as collaborative filtering, multi-armed bandits, and deep learning–based recommender systems, rely on adaptive strategies to cope with noisy or partially observed user preferences. While our model is highly stylized and focuses on the structural properties of the recommendation process rather than optimization or learning, comparing its behavior conceptually to these frameworks helps highlight both its scope and its limitations.

A natural point of comparison is with multi-armed bandit algorithms \cite{LaSz20, BuCe12}, which also model sequential decision-making and recommendation dynamics. However, the two frameworks differ fundamentally: in our model, the “rewards’’ (preference values) are static and noise-free, with randomness arising solely from the distribution of edge lengths, while in bandit algorithms, the primary challenge is to manage uncertainty in the reward distributions through exploration and exploitation. Because of this distinction, standard bandit baselines are not directly applicable to our setting, and a meaningful comparison would require a reformulation of the bandit problem that departs significantly from its usual assumptions.

A closer comparison can be drawn with bandit algorithms in the noiseless setting, where the reward associated with each arm is fixed and known after a single pull. In that framework, the problem is essentially trivial: the optimal arm is identified immediately, and the learning phase vanishes. By contrast, our model remains nontrivial even in the absence of noise, since the randomness lies not in the evaluation of topics but in the structure of the recommendation process itself. The stochastic nature of the edge lengths forces the system to repeatedly revisit or discover new topics, giving rise to rich asymptotic behaviors such as strong or weak consistency.

\color{black}

\subsection{Perspectives and open problems}
\color{black}
One might naturally expect the $k$-choice model to exhibit distinct behavior in the $k+1$ regimes defined by the tail conditions
\[
\E \min_{1\le i \le \ell} Z^{(i)} = \infty \quad \text{and} \quad \E \min_{1\le i \le \ell+1} Z^{(i)} < \infty,
\]
for each $\ell \in \{0, \dots, k\}$ (with the understanding that the left-hand condition is omitted when $\ell = 0$ and the right-hand condition is omitted when $\ell = k$).  
Although our results extend straightforwardly to the $k$-choice setting, the current consistency analysis does not distinguish between these individual regimes. A deeper understanding of the distinctions between these regimes and their implications for recommendation outcomes would be of significant interest.

The present model remains highly abstract and does not directly reflect real-world applications. For instance, the assumption of static preference values for each topic may not capture the evolution of actual user behavior. A natural direction for future work is to develop dynamic generalizations of the model that account for time-varying preferences for each topic.

\color{black}

% The eagle-eyed reader must have noticed that the bottom right corner of the table summarizing the results is left unfilled. Indeed, the question of the tightness of the rightmost leaf in the heavy-tailed case remains an open problem.

\newpage
\section*{Appendix}

\begin{table}[H]
\centering
\renewcommand{\arraystretch}{1} % increase row height
\begin{tabular}{ll}
\hline
\textbf{Symbol} & \textbf{Description} \\
\hline
$T_n$ & Set of vertices reached by vertex $n$. \\
$\Lset_n$ &\vtop{\hbox{\strut Set of vertices with non-positive index (or \textit{leaves}) that is reached}\hbox{\strut from vertex $n$. ($\Lset_n=T_n\cap \{0, -1, -2, \dots\})$}}\\
$\leaves_n$ & Number of leaves reached from $n$. ($\leaves_n = |\Lset_n|$)\\
$\rightmost_n$ & Rightmost leaf in $T_n$. $(\rightmost_n=\max \left\{ i : i \in  \Lset_n \right\})$\\
$\leftmost_n$ &Leftmost leaf in $T_n$. ($\leftmost_n=\min \left\{ i : i \in  \Lset_n \right\}$)\\
$T_n^Z$ (resp. $T_n^W)$ & \vtop{\hbox{\strut Chain in $T_n$ obtained by repeatedly taking}\hbox{\strut the $Z$-edges (resp. the $W$-edges)}}\\
$S_n$ & \vtop{\hbox{\strut Chain in $T_n$ obtained by repeatedly taking shortest}\hbox{\strut outgoing edges}}\\
$J_n$ & \vtop{\hbox{\strut Number of vertices in $S_n$ connected to a leaf via their}\hbox{\strut longest outgoing edge}}\\
$C_n$ & Topic recommended at time $n$ \\
$Z_n, W_n$ & Independent random delays for recommendations \\
$U_i$ & Preference value of topic $i$ ($i\le 0$) \\
$V_n$ & Preference value of topic seen at time $n$\\
$p_i, q_i$ & Probabilities associated with $Z$ ($p_i = \PP(Z \ge i)$, $q_i = \PP(Z=i)$) \\
$p_i^*$ & \vtop{\hbox{\strut Upper tail probability associated with $\max(Z,W)$}\hbox{\strut ($p_i^*=\PP(\max(Z,W)\ge i) = 2p_i - p_i^2$)}}\\
$r_n$ & \vtop{\hbox{\strut Probability that vertex $0$ belongs to the $Z$-chain starting}\hbox{\strut at vertex $n$ ($r_n =\prob{0\in T_n^Z}=\prob{0\in T_n^W})$}}\\
$v_n$ & \vtop{\hbox{\strut Probability that vertex $0$ belongs to the chain $S_n$}\hbox{\strut ($v_n =\prob{0\in S_n}$)}}\\
$\IND_A$ & Indicator function of event $A$ \\
$\E X$ & Expected value of random variable $X$\\
$\V X$ & Variance of random variable $X$\\

\hline
\end{tabular}
\caption*{Notation Table}
\label{tab:notation}
\end{table}

\section*{Funding}
Luc Devroye was supported by NSERC
under grant number RGPIN-2024-04164.
G\'abor Lugosi acknowledges the support of Ayudas Fundación BBVA a
Proyectos de Investigación Científica 2021 and
the Spanish Ministry of Economy and Competitiveness grant PID2022-138268NB-I00, financed by MCIN AEI 10.13039 501100011033,
FSE+MTM2015-67304-P, and FEDER, EU.

%\bibliographystyle{apalike}
%\bibliography{biblio_surf}

\end{document}